\newtheorem{assumption}{Assumption}
\newtheorem{lemma}{Lemma}
\newtheorem{theorem}{Theorem}
\newtheorem{corollary}{Corollary}
\theoremstyle{plain}
\newtheorem{remark}[theorem]{Remark}
\theoremstyle{definition}
\newtheorem{definition}[theorem]{Definition}
\DeclareMathOperator*{\argmin}{argmin}
\DeclareMathOperator*{\Argmin}{Argmin}
\newcommand{\lin}[1]{\left\langle #1\right\rangle} 
\newcommand{\ExpBr}[1]{\mathbb{E}\left[#1\right]}
\newcommand{\algname}[1]{\textsf{\textcolor{ForestGreen}{#1}}}
\newcommand{\prox}{\mathop{\mathrm{prox}}\nolimits}
\newcommand{\avein}{\frac{1}{n}\sum_{i=1}^n}
\newcommand{\RR}{\mathbb{R}}
\newcommand{\cO}{{\cal O}}
\newcommand{\bI}{\mathbf{I}}
\newcommand{\bA}{\mathbf{A}}
\newcommand{\bQ}{\mathbf{Q}}
\newcommand{\del}[1]{}
\newcommand{\eqdef}{\stackrel{\text{def}}{=}}
\begin{document}

%
\runningtitle{Speeding Up Stochastic Proximal Optimization In The High Hessian Dissimilarity Setting}

%

\twocolumn[

\aistatstitle{Speeding up Stochastic Proximal Optimization \\in the High Hessian Dissimilarity Setting}

\aistatsauthor{ Elnur Gasanov \And Peter Richt\'arik}

\aistatsaddress{ KAUST\And  KAUST} 
]

\begin{abstract}
  Stochastic proximal point methods have recently garnered renewed attention within the optimization community, primarily due to their desirable theoretical properties. Notably, these methods exhibit a convergence rate that is independent of the Lipschitz smoothness constants of the loss function, a feature often missing in the loss functions of modern ML applications. In this paper, we revisit the analysis of the Loopless Stochastic Variance Reduced Proximal Point Method (\algname{L-SVRP}). Building on existing work, we establish a theoretical improvement in the convergence rate in scenarios characterized by high Hessian dissimilarity among the functions. Our concise analysis, which does not require smoothness assumptions, demonstrates a significant improvement in communication complexity compared to standard stochastic gradient descent.
\end{abstract}

\section{INTRODUCTION}

Machine Learning (ML) has become integral to the advancement of modern applications across diverse sectors, including manufacturing~\citep{manufacturing, predictive_maintenance}, healthcare~\citep{healthcare_dl, maleki2024role}, finance~\citep{goodell2021artificial}, retail~\citep{oosthuizen2021artificial}, agriculture~\citep{agriculture}, and beyond. This widespread integration arises from machine learning's capacity to analyze vast amounts of data, identify patterns, and leverage them for predictive insights.

Many machine learning problems admit a rigorous mathematical formulation. A prominent example is Expected Risk Minimization (ERM)~\citep{shalev2014understanding}, which is mathematically represented as the minimization problem:
\begin{equation}\label{eq:finite_sum}
\min\limits_{x \in\RR^d} \left\{f(x) \eqdef \avein f_i(x)\right\},
\end{equation}
where $n$ denotes the number of data points, $x \in \RR^d$ represents the model parameters, and $f_i(x)$ is the loss function associated with $i$-th data point. It is worth noting that the same formulation~\eqref{eq:finite_sum} can be interpreted in the context of Federated Learning~\citep{FEDLEARN, fl_tianli}, where $n$ refers to the number of clients, and $f_i(x)$ represents the loss function associated with data of client $i$.

Despite its simple form, problem~\eqref{eq:finite_sum} is an unusual, rich source of research problems, exhibiting a wide variety of properties and providing convergence bounds that depend on the assumptions imposed on $f_i$~\citep{tight_bounds_smoothness}. To focus our analysis amid this variability, we rely on two assumptions commonly found in the literature: Hessian similarity and strong convexity.

\begin{assumption}[Hessian similarity]~\label{as:hess_sim_finite_sum}
There exists a constant $\delta \geq 0$ such that for all vectors $x, y \in \mathbb{R}^d$, the following inequality holds:
\begin{equation}\label{eq:hess_sim_finite_sum}
\avein \|\nabla f_i(x) - \nabla f_i(y) - [\nabla f(x) - \nabla f(y)]\|^2 \leq \delta^2\|x - y\|^2.
\end{equation}
\end{assumption}

\begin{assumption}[Strong convexity]~\label{as:strong_cvx}
Each function $f_i$ is $\mu$-strongly convex, meaning that for all vectors $x, y \in \mathbb{R}^d$, the following inequality holds for some $\mu \geq 0$:
	\begin{equation}
		f_i(y) \geq f_i(x) + \lin{\nabla f_i(x), y - x} + \frac{\mu}{2}\|x-y\|^2.
	\end{equation}
\end{assumption}

It is worth noting that when $\mu = 0$, this assumption simplifies to the standard definition of convexity.

\Cref{as:hess_sim_finite_sum} captures the similarity condition first introduced by \citet{dane} (Lemma 1) to analyze the convergence rate of their \algname{DANE} algorithm for quadratic functions. This assumption was later independently rediscovered in \citep{szlendak2022permutation}. Furthermore, it serves as a second-order heterogeneity assumption~\citep{woodworth_soh, saber}, which, in some cases, depends on the input data but not on the data labels. Recently, \Cref{as:hess_sim_finite_sum} has gained attention as a potentially advantageous alternative to the often restrictive Lipschitz smoothness assumption in Deep Learning~\citep{non_smooth_dl, goodfellow2016deep}. The Lipschitz smoothness assumption posits that for all vectors $x, y \in \mathbb{R}^d$, the norm of the difference between gradients is bounded by a constant $L > 0$ multiplied by the distance between $x$ and~$y$:
$$
\|\nabla f(x) - \nabla f(y)\| \leq L \|x - y\|.
$$
For instance, \algname{SCAFFOLD}~\citep{scaffold} leveraged Hessian similarity for quadratic functions, eliminating the dependence on the smoothness constant in communication complexity. Subsequently, the \algname{SONATA} algorithm~\citep{sonata} extended this result to strongly convex functions.
Recent studies have further built upon~\Cref{as:hess_sim_finite_sum} in the context of ERM, leading to the development of  more efficient and even accelerated algorithms~\citep{kovalev2022optimal, khaled2023faster, svrs, stich_dane, stabilized_dane, saber}.

We emphasize that our analysis does not rely on the assumption of Lipschitz smoothness. Moreover, smoothness directly implies Assumption~\ref{as:hess_sim_finite_sum}, as demonstrated in the following derivation:
\begin{align*}
& \avein \|\nabla f_i(x) - \nabla f_i(y) - [\nabla f(x) - \nabla f(y)]\|^2  \\
& \overset{\eqref{eq:bias_variance_decomposition}}{=} \quad \avein \|\nabla f_i(x) - \nabla f_i(y)\|^2 - \|\nabla f(x) - \nabla f(y)\|^2 \\
& \leq \quad  \avein \|\nabla f_i(x) - \nabla f_i(y)\|^2 \\
& \leq \quad \avein L_i^2 \| x - y\|^2,
\end{align*}
where the first equality follows from the variance decomposition formula. Here, $L_i$ denotes the smoothness constant of function $f_i$. Our derivation above implies that $\delta^2 \leq \avein L_i^2$. Notably, in the  inequality on the third line, we discard the potentially large term $\left\| \nabla f(x) - \nabla f(y) \right\|^2$, indicating that $\delta$ can, in practice, be much smaller than the smoothness constant. This observation highlights the extra power of our assumption in comparison to the traditional $L$-smoothness condition.

The strong convexity assumption is a cornerstone in optimization theory and has been widely utilized across numerous algorithms~\citep{nesterov2018lectures}. In particular, this assumption ensures that the function $f(x)$ is $\mu$-strongly convex, guaranteeing the existence of a unique solution $x_\ast \in \mathbb{R}^d$ to the ERM problem when $\mu > 0$. In the case where $\mu = 0$, $x_\ast$ refers to any point in $\Argmin f$.

\subsection{Contributions}
This paper is dedicated to a more in-depth analysis of the Loopless Stochastic Variance Reduced Proximal point method (\algname{L-SVRP}), which was initially introduced in the work of \citet{khaled2023faster}, inspired by the~\algname{L-SVRG} of~\cite{l_svrg}. Our primary contributions are outlined as follows:
\begin{enumerate}  
	\item Under Assumptions~\ref{as:hess_sim_finite_sum} and~\ref{as:strong_cvx}, we derive a novel iteration complexity bound of $O\left(\left(n\frac{\delta}{\mu} + n\right)\log{\frac{\|x_0 - x_\ast\|^2}{\varepsilon}}\right)$ for the \algname{L-SVRP} algorithm, outlined in~\Cref{alg:l_svrp}. Importantly, our result demonstrates a significant improvement over the iteration and communication complexity bounds previously established by \citet{khaled2023faster}. In particular, our complexity bound is asymptotically sharper in the $\frac{\delta}{\mu} \geq n$ regime, surpassing the earlier results obtained by~\citet{khaled2023faster, unified_theory_sppa}. Moreover, unlike previous analyses, such as \citet{vrt_sppa}, our approach does not rely on the smoothness assumption. Additionally, we observe that \algname{L-SVRP} can be applied to monotone inclusion problems, as demonstrated by \citet{sadiev2024stochasticproximalpointmethods}.
	\item We provide experimental validation to support our theoretical findings. Specifically, our experiments confirm the linear convergence rate of the \algname{L-SVRP} algorithm and verify the accuracy of the derived upper bounds on its convergence rate.
\end{enumerate}

The remainder of the paper is structured as follows. In~\Cref{sec:importance_loopless_prox}, we guide the reader through a detailed exploration of the significance of loopless methods and proximal algorithms, tracing their development and relevance in optimization. Following this, we introduce the \algname{L-SVRP} algorithm, which belongs to both the loopless and proximal categories. In~\Cref{sec:theory}, we delve into our novel theoretical contributions, offering a thorough comparison between our results and existing iteration complexity rates. Finally, in~\Cref{sec:exps}, we provide a thorough presentation of our experimental results.

\section{\uppercase{The Importance of Loopless and Proximal Methods in Optimization}}\label{sec:importance_loopless_prox}
We begin by exploring the dual nature—both the advantages and limitations—of early stochastic variance reduction methods, which have played a pivotal role in the development of modern optimization algorithms.

\subsection{Stochastic Gradient Descent}

A natural approach to solving the ERM problem~\eqref{eq:finite_sum} is to employ the Stochastic Gradient Descent (\algname{SGD})~\citep{robbins1951stochastic} algorithm. At each iteration $k$ of \algname{SGD}, the algorithm randomly selects an index $i_k \in \{1, \dots, n\} $ and updates the parameters according to the rule
\begin{equation}\label{eq:sgd}
x_{k+1} = x_k - \gamma \nabla f_{i_k}(x_k),
\end{equation}
where $\gamma$ is a positive stepsize. A key issue with \algname{SGD} is its convergence behavior: instead of converging to the exact solution of the finite sum problem~\eqref{eq:finite_sum}, it converges to a neighborhood of the optimal solution only. The size of this neighborhood is proportional to the variance of the stochastic gradient and the stepsize $\gamma$, while being inversely proportional to the strong convexity parameter $\mu$, as shown by \cite{gower2019sgd}. Consequently, the stepsize $\gamma$ must be reduced to match the target precision $\varepsilon$, leading to very slow convergence of $\widetilde{\cO}\left(\frac{\max L_i}{\mu} + \frac{\sigma_{\ast}^2}{\mu^2 \varepsilon}\right)$, where $\sigma_\ast^2 = \avein \|\nabla f_i(x_\ast)\|^2$. To overcome these limitations, variance reduction techniques have been developed, leading to faster convergence rates. A prominent example of such methods is the Stochastic Variance Reduced Gradient~\citep{svrg} algorithm.

\subsection{Stochastic Variance Reduced Gradient}

Stochastic variance reduced gradient methods, such as \algname{SVRG}~\citep{svrg}, address the limitations of \algname{SGD} by reducing the variance of stochastic gradient estimates, enabling a significant improvement in convergence speed.

\begin{algorithm}
	\begin{algorithmic}
		\STATE \textbf{Input:} update frequency $m \in \mathbb{N}$, stepsize $\gamma > 0$, initial iterate \(x_0 \in \RR^d\), reference point $w_0 \in \RR^d$, number of iterations $K \in \mathbb{N}$
		\FOR{$k = 0, 1, 2, \dots, K - 1$}
		\STATE Uniformly at random choose $i_k \in \{1, 2, \dots, n\}$
		\STATE $x_{k+1} = x_{k} - \gamma (\nabla f_{i_k}(x_{k}) - \nabla f_{i_k}(w_k) + \nabla f(w_k))$
		\STATE $w_{k+1} = 
		\begin{cases}
			x_{k+1} & \text{if } (k+1) \ \% \ m = 0, \\
			w_k & \text{otherwise.}
		\end{cases}$
		\ENDFOR
		\STATE \textbf{Output:} $x_K$
	\end{algorithmic}
	\caption{\algname{SVRG}}
	\label{alg:svrg}
\end{algorithm}

Unlike~\algname{SGD}, the Stochastic Variance Reduced Gradient (\algname{SVRG}) method~\citep{svrg}, as detailed in Algorithm~\ref{alg:svrg}, progressively estimates the gradient shift $ \nabla f_i(x_\ast) $. This adjustment allows \algname{SVRG} to converge precisely to the solution of the original problem~\eqref{eq:finite_sum} with an iteration complexity of $\widetilde{\cO}\left( n + \frac{L}{\mu} \right)$, assuming an optimal inner loop size  $m$. This improvement alone represents a significant acceleration compared to the iteration complexity of full batch gradient descent, which is $ \widetilde{\cO}\left(n\frac{L}{\mu}\right)$.

However, practical implementation introduces challenges. The optimal inner loop size $m$, as suggested by theoretical analysis, depends on the strong convexity parameter $\mu$, which is often unknown or loosely estimated in real-world scenarios.

This issue was addressed by the introduction of \algname{L-SVRG}, the loopless variant of \algname{SVRG} \citep{l_svrg, l_svrg_original, s2gd}. The key trick is to eliminate the inner loop entirely through randomization.

\subsection{Escaping the Loop}
Loopless SVRG (\algname{L-SVRG}), as presented in~\Cref{alg:l_svrg}, closely resembles the original \algname{SVRG} algorithm, but eliminates the inner loop by employing a randomized update for the reference point $w_k$.

\begin{algorithm}
	\begin{algorithmic}
		\STATE \textbf{Input:} probability $p \in (0, 1]$, stepsize $\gamma > 0$, initial iterate $x_0 \in \RR^d$, reference point $w_0 \in\RR^d$, number of iterations $K \in \mathbb{N}$
		\FOR{$k = 0, 1, 2, \dots, K - 1$}
		\STATE Uniformly at random choose $i_k \in \{1, 2, \dots, n\}$
		\STATE $x_{k+1} = x_{k} - \gamma (\nabla f_{i_k}(x_{k}) - \nabla f_{i_k}(w_k) + \nabla f(w_k))$
		\STATE $w_{k+1} = \begin{cases}
		x_{k+1} & \text{with probability } p\\
		w_k &  \text{with probability } 1 - p
		\end{cases}$
		\ENDFOR
		\STATE \textbf{Output:} $x_K$
	\end{algorithmic}
	\caption{\algname{L-SVRG}}
	\label{alg:l_svrg}
\end{algorithm}

The iteration complexity of this algorithm, when $p = \frac{1}{n}$, matches that of \algname{SVRG} with the optimal inner loop size $m$, resulting in the complexity $ \widetilde{\cO}\left(n + \frac{L}{\mu}\right)$. However, in \algname{L-SVRG}, the expected inner loop length, $\frac{1}{p}$, depends only on the number of samples or clients $n$, which enhances its practical applicability. Moreover, the convergence analysis becomes more straightforward and easier to interpret.

The application of the loopless framework extends to several other algorithmic pairs. For example, \algname{SARAH}~\citep{sarah} is enhanced by its loopless variant, which incorporates probabilistic mechanism in \algname{PAGE}~\citep{page} and quantization in \algname{MARINA}~\citep{marina}. Notably, \algname{PAGE} achieves optimal complexity in smooth non-convex settings. Similarly, the stochastic algorithm \algname{ESGDA}~\citep{esgda}, designed for saddle point optimization problems, simplifies to \algname{RSGDA}~\citep{rsgda}, making the analysis more tractable. Finally, some algorithms, such as \algname{ADIANA}~\citep{adiana}, \algname{CANITA}~\citep{canita}, and \algname{ANITA}~\citep{anita}, were developed solely in their loopless form.

To summarize, the loopless approach offers several advantages for algorithm design,

\begin{enumerate}
\item it simplifies the structure of otherwise more complex algorithms,
\item it facilitates a more simplified convergence analysis,
\item it often results in tighter convergence bounds, these bounds are never worse than existing approaches and are sometimes significantly sharper,
\item it is versatile and can be applied across various optimization settings.
\end{enumerate}

\subsection{Proximal Algorithms}
Proximal algorithms~\citep{parikh2013proximal, condat2023proximalsplittingalgorithmsconvex} are a fundamental class of optimization methods, particularly useful for handling non-smooth and composite objective functions. They leverage the concept of the proximity operator, defined via:
\begin{equation}\label{eq:prox_op_def}
\prox_{g}(x) \eqdef \argmin\limits_{x'\in\RR^d}\left\{g(x')+\frac{1}{2}\|x' - x\|^2\right\},
\end{equation}
where $g: \RR^d \rightarrow \RR \cup \{+\infty\}$ is an extended real-valued function. If $g(x)$ is a proper, closed, convex function, then the minimizer of $g(x')~+~\frac{1}{2}\|x' - x\|^2$ exists and is unique~\citep{bauschke2011convex}. Moreover, if $g$ is differentiable, the following equivalence holds:
\begin{equation}\label{eq:prox_op_equiv}
y = \prox_{\gamma g}(x) \ \Leftrightarrow \ y + \gamma \nabla g(y) = x.
\end{equation}
Further properties and formal statements regarding the proximity operator are provided in the appendix.

The simplest stochastic proximal algorithm is the Stochastic Proximal Point Method (\algname{SPPM}) by~\citet{duchi}. At each iteration $k$, for a randomly selected $i_k \in \{1, \dots, n\}$, the following update is performed:
\begin{equation}
x_{k+1} = \prox_{\gamma f_{i_k}}(x_k),
\end{equation}
or equivalently, using \eqref{eq:prox_op_equiv}:

\begin{equation}
	x_{k+1} = x_k - \gamma \nabla f_{i_k}(x_{k+1}).
\end{equation}

When compared to~\eqref{eq:sgd}, we observe that \algname{SPPM} is virtually identical to~\algname{SGD}, with one seemingly minor yet conceptual difference: the gradient is computed at the new point $x_{k+1}$. Proximal algorithms, including \algname{SPPM}, are implicit methods, meaning the update involves solving an equation where the new iterate appears on both sides. This implicitness leads to enhanced stability in practice~\citep{ryu2016stochastic}, drawing parallels with the stability observed in implicit numerical solutions of ordinary differential equations~\citep{runge_kutta}. Moreover, the convergence of proximal algorithms does not depend on the smoothness properties of the minimized functions~\citep{prox_unified}, which makes them appealing for optimizing deep learning loss objectives where the smoothness assumption may not hold~\citep{non_smooth_dl}.

In recent years, there has been growing interest in applying proximal algorithms within Federated Learning (FL). \algname{FedAvg}~\citep{FEDLEARN, 44822}, a well-known FL algorithm used in real-world applications, operates as follows: a client, upon receiving an iterate $x_k$ from the server, performs \textit{several} steps of \algname{SGD} on its local data. The final iterate is then transmitted back to the server for aggregation, and this process repeats iteratively. Performing multiple local iterations can be viewed as approximating the proximal operator applied to the local objective function, akin to solving problem~\eqref{eq:prox_op_def} for some surrogate local function. Some researchers propose replacing local steps with the proximity operator to deepen our understanding of local methods and obtain faster convergence rates~\citep{fedprox, pfl_moreau, fedexp, fedexprox}. Other researchers interpret the aggregation step as a proximity operator, achieving state-of-the-art communication complexity for this class of methods~\citep{proxskip, 5gcs, proxskip_tighter, vr_proxskip}.

Based on our discussion, we can summarize the key points as follows:

\begin{enumerate}
	\item Proximal algorithms are particularly relevant in Federated Learning, owing to their connection with local computations and distributed aggregation schemes.
	\item They exhibit enhanced stability in practice, particularly in stochastic settings.
	\item Their convergence does not rely on smoothness assumptions, making them suitable for nonsmooth optimization problems.
\end{enumerate}

\subsection{Loopless and Proximal}

Motivated by the reasons outlined in the previous two sections, we focus our research on an algorithm that lies at the intersection of both loopless and proximal methods: the Loopless Stochastic Variance Reduced Proximal Point method (\algname{L-SVRP}).

\begin{algorithm}
	\begin{algorithmic}
		\STATE \textbf{Input:} probability $p \in (0, 1]$, stepsize $\gamma > 0$, initial iterate $x_0 \in \RR^d$, reference point $w_0 \in\RR^d$,  number of iterations $K \in \mathbb{N}$
		\FOR{$k = 0, 1, 2, \dots, K - 1$}
		\STATE Uniformly at random choose $i_k \in \{1, 2, \dots, n\}$
		\STATE $x_{k+1} = \prox_{\gamma f_{i_k}}(x_k + \gamma (\nabla f_{i_k}(w_k) - \nabla f(w_k))$
		\STATE $w_{k+1} = \begin{cases}
			x_{k+1} & \text{with probability } p\\
			w_k &  \text{with probability } 1 - p
		\end{cases}$
		\ENDFOR
		\STATE \textbf{Output:} $x_K$
	\end{algorithmic}
	\caption{\algname{L-SVRP}}
	\label{alg:l_svrp}
\end{algorithm}

\algname{L-SVRP} can be viewed as the implicit counterpart of the previously discussed \algname{L-SVRG} algorithm. Specifically, the main update steps of both algorithms are related as follows:
\begin{align*}
	x_{k+1} &\overset{\text{Alg.~\ref{alg:l_svrg}}}{=} x_{k} - \gamma \left( \nabla f_{i_k}(x_{k}) - \nabla f_{i_k}(w_k) + \nabla f(w_k) \right), \\
	x_{k+1} &\overset{\text{Alg.~\ref{alg:l_svrp}}}{=} x_{k} - \gamma \left( \nabla f_{i_k}(x_{k+1}) - \nabla f_{i_k}(w_k) + \nabla f(w_k) \right),
\end{align*}
where, in deriving the main update rule of Algorithm~\ref{alg:l_svrp}, we utilize the equivalence from~\eqref{eq:prox_op_equiv}. This relationship highlights that \algname{L-SVRP} incorporates the gradient at the future iterate $x_{k+1}$ in its update rule, making it an implicit method. In contrast, \algname{L-SVRG} uses the gradient at the current iterate $x_k$, characterizing it as an explicit method.

Although several alternative algorithms, such as \algname{SVRS}~\citep{svrs} and \algname{SABER}~\citep{saber}, have recently been proposed—employing similar combinations of loopless and proximal ideas and achieving superior convergence rates under Assumption~\ref{as:hess_sim_finite_sum}—we wish to emphasize that the goal of this work is to improve the convergence rate of the existing \algname{L-SVRP} algorithm. To the best of our knowledge, we are the first to revisit the theoretical analysis of \algname{L-SVRP}. Other algorithms, such as~\algname{RandProx}~\citep{randprox}, while sharing similar properties, are analyzed under the assumption of smoothness.

\section{THEORETICAL ANALYSIS}\label{sec:theory}

To establish contraction, we define a Lyapunov function that combines two components: $\mathbb{E}\left[ \| x_k - x_\ast \|^2 \right]$ and $\mathbb{E}\left[ \| w_k - x_k \|^2 \right]$. Specifically, we express it as:

\[
\Lambda_k \coloneqq \|x_k - x_\ast\|^2 + c\|w_k - x_k\|^2,
\]

where \(c > 0\) is a positive scalar determined later, \(x_k\) and \(w_k\) are the iterates of Algorithm~\ref{alg:l_svrp}, and $x_\ast \in \Argmin f$ represents any solution to problem~\eqref{eq:finite_sum}.

In the course of our derivations, we will make use of the following conditional expectation:
$$
\bar{x}_{k+1} = \ExpBr{x_{k+1} \mid x_k, w_k}.
$$

\subsection{Bounding the First Term of the Lyapunov Function}
To begin the analysis, we establish a bound $\mathbb{E}\left[ \| x_k - x_\ast \|^2 \right]$, as presented in Lemma~\ref{lm:lsvrp_lyap_first}.

\begin{lemma}\label{lm:lsvrp_lyap_first}
	Let~\Cref{as:strong_cvx} (strong convexity) hold. Let us define 
	$
	\psi_i(x) \eqdef f_i(x)-f(x).
	$
	Then, for one iteration of Algorithm~\ref{alg:l_svrp}, the following inequality holds:
	\begin{equation}\label{eq:lsvrp_lyap_first}
		\begin{aligned}
			& \ExpBr{\|x_{k+1} - x_\ast\|^2 | x_k, w_k} \leq  \frac{1}{1 + \mu\gamma} \|x_k - x_\ast\|^2 \\
			& - \ExpBr{\|x_{k+1} - \bar{x}_{k+1}\|^2} - \frac{1}{1 + \mu\gamma}\|\bar{x}_{k+1} - x_k\|^2\\
			& - \frac{2}{\mu + \frac{1}{\gamma}}  \ExpBr{\lin{\nabla \psi_{i_k}(\bar{x}_{k+1}) - \nabla \psi_{i_k}(w_k), x_{k+1} - \bar{x}_{k+1}} | x_k, w_k}.
		\end{aligned}
	\end{equation}
\end{lemma}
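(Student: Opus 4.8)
The plan is to make the implicit proximal step explicit and then separate the deterministic ``mean'' behaviour of the iterate from its stochastic fluctuation. First I would use the optimality characterization \eqref{eq:prox_op_equiv} to turn the update of Algorithm~\ref{alg:l_svrp} into the implicit identity
\[
x_{k+1} - x_k = \gamma\left(\nabla\psi_{i_k}(w_k) - \nabla f_{i_k}(x_{k+1})\right),
\]
and, writing $\nabla f_{i_k} = \nabla f + \nabla\psi_{i_k}$, split the stochastic gradient into a full-gradient part $\nabla f(x_{k+1})$ and a correction $\nabla\psi_{i_k}(x_{k+1}) - \nabla\psi_{i_k}(w_k)$. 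The crucial structural fact I would record here is that the correction is conditionally mean-zero: $\ExpBr{\nabla\psi_{i_k}(y)\mid x_k,w_k} = \avein\left(\nabla f_i(y) - \nabla f(y)\right) = 0$ for every deterministic $y$.

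Next I would expand the target with the elementary identity $\|x_{k+1}-x_\ast\|^2 = \|x_k-x_\ast\|^2 + 2\lin{x_{k+1}-x_k,\,x_{k+1}-x_\ast} - \|x_{k+1}-x_k\|^2$ and substitute the implicit relation into the inner product. The full-gradient part contributes $-2\gamma\lin{\nabla f(x_{k+1}),\,x_{k+1}-x_\ast}$, which I lower-bound using the $\mu$-strong convexity of $f$ guaranteed by \Cref{as:strong_cvx} together with $\nabla f(x_\ast)=0$. Carrying this negative multiple of $\|x_{k+1}-x_\ast\|^2$ to the left-hand side is what manufactures the multiplicative contraction factor; pinning it to the exact value $1+\mu\gamma$ appearing in \eqref{eq:lsvrp_lyap_first} is cleanest through the firm nonexpansiveness of the strongly convex resolvent $\prox_{\gamma f_{i_k}}$, comparing $x_{k+1}$ against the fixed point $x_\ast = \prox_{\gamma f_{i_k}}\!\left(x_\ast + \gamma\nabla\psi_{i_k}(x_\ast)\right)$. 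The point of the proximal (implicit) structure is that it lets me keep the full negative distance $-\|x_{k+1}-x_k\|^2$ rather than discarding it.

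Finally I would take the conditional expectation $\ExpBr{\cdot\mid x_k,w_k}$, where two identities do the work. The bias--variance split $\ExpBr{\|x_{k+1}-x_k\|^2\mid x_k,w_k} = \ExpBr{\|x_{k+1}-\bar{x}_{k+1}\|^2\mid x_k,w_k} + \|\bar{x}_{k+1}-x_k\|^2$ produces exactly the two negative quadratics in the statement, and the decomposition $x_{k+1}-x_\ast = (x_{k+1}-\bar{x}_{k+1}) + (\bar{x}_{k+1}-x_\ast)$ is applied to the correction term. Since the correction is conditionally mean-zero, its pairing with the deterministic direction $\bar{x}_{k+1}-x_\ast$ vanishes, leaving only a covariance against the fluctuation $x_{k+1}-\bar{x}_{k+1}$, and recentering the argument of $\nabla\psi_{i_k}$ at $\bar{x}_{k+1}$ collapses it into the single inner product $\lin{\nabla\psi_{i_k}(\bar{x}_{k+1})-\nabla\psi_{i_k}(w_k),\,x_{k+1}-\bar{x}_{k+1}}$. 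I expect this last recentering to be the main obstacle: one must check that the residual covariance generated when replacing $\nabla\psi_{i_k}(x_{k+1})$ by $\nabla\psi_{i_k}(\bar{x}_{k+1})$ either cancels under the expectation or is absorbed into the retained variance term, and that the bookkeeping leaves precisely the weights $\tfrac{1}{1+\mu\gamma}$ and the coefficient $\tfrac{2}{\mu+1/\gamma}$ stated in \eqref{eq:lsvrp_lyap_first}.
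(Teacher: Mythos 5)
Your strategy (make the prox step explicit, expand $\|x_{k+1}-x_\ast\|^2$, and separate mean from fluctuation) is genuinely different from the paper's route, but it breaks at exactly the step you flag as the main obstacle, and the break is not mere bookkeeping. The correction in your implicit identity is $\nabla\psi_{i_k}(x_{k+1}) - \nabla\psi_{i_k}(w_k)$, and while $\ExpBr{\nabla\psi_{i_k}(y)\mid x_k,w_k}=0$ for deterministic $y$ (so the $w_k$-part is fine), the point $x_{k+1}$ is \emph{not} deterministic given $(x_k,w_k)$: it depends on $i_k$ through the prox. Hence $\ExpBr{\nabla\psi_{i_k}(x_{k+1})\mid x_k,w_k}\neq 0$ in general, the pairing with the deterministic direction $\bar{x}_{k+1}-x_\ast$ does not vanish, and after recentering you are left with the residual
\[
\ExpBr{\lin{\nabla\psi_{i_k}(x_{k+1})-\nabla\psi_{i_k}(\bar{x}_{k+1}),\,x_{k+1}-x_\ast}\mid x_k,w_k},
\]
which is neither zero nor absorbable into the retained variance term: $\psi_{i_k}=f_{i_k}-f$ is a difference of convex functions, so $\lin{\nabla\psi_{i_k}(a)-\nabla\psi_{i_k}(b),a-b}$ has no sign, and the lemma is stated under strong convexity alone, with no smoothness or Hessian-similarity hypothesis available to control this gradient difference. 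A secondary issue is the constants: the direct expansion plus strong convexity of $f$ yields a prefactor $\frac{1}{1+2\mu\gamma}$ multiplying \emph{all} terms, which does not reproduce the mixed weights $1$, $\frac{1}{1+\mu\gamma}$ and $\frac{2}{\mu+1/\gamma}$ in \eqref{eq:lsvrp_lyap_first}; and the resolvent-contraction detour compares against $\prox_{\gamma f_{i_k}}(x_\ast+\gamma\nabla\psi_{i_k}(x_\ast))$, which introduces $\nabla\psi_{i_k}(x_\ast)$ rather than the $\nabla\psi_{i_k}(\bar{x}_{k+1})-\nabla\psi_{i_k}(w_k)$ structure the lemma requires.

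The paper sidesteps all of this by never evaluating $\nabla f_{i_k}$ or $\nabla\psi_{i_k}$ at the random point $x_{k+1}$: it observes that $x_{k+1}$ minimizes the $(\mu+\frac{1}{\gamma})$-strongly convex function $f_{i_k}^k(x)=f_{i_k}(x)-\lin{x,h_k}+\frac{1}{2\gamma}\|x-x_k\|^2$, applies the minimizer inequality $f_{i_k}^k(x_\ast)\geq f_{i_k}^k(x_{k+1})+\frac{\mu+1/\gamma}{2}\|x_\ast-x_{k+1}\|^2$, and then lower-bounds $f_{i_k}(x_{k+1})$ by its strong-convexity tangent at the \emph{deterministic} point $y=\bar{x}_{k+1}$. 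Gradients therefore only ever appear at $\bar{x}_{k+1}$ and $w_k$, both measurable with respect to $(x_k,w_k)$, so the mean-zero cancellations you want are legitimate there, and dividing the resulting function-value inequality by $\frac{\mu+1/\gamma}{2}$ produces exactly the coefficients in \eqref{eq:lsvrp_lyap_first}. To salvage your route you would have to anchor the stochastic correction at $\bar{x}_{k+1}$ from the outset, which essentially forces you back to the paper's three-point, function-value argument.
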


We defer the proof of this lemma to the appendix. 

The most challenging term on the right-hand side of inequality~\eqref{eq:lsvrp_lyap_first} is the last inner product. Utilizing Assumption~\ref{as:hess_sim_finite_sum}, we can bound this term by a quantity proportional to $\| w_k - \bar{x}_{k+1} \|^2$. To eliminate this term in our combined Lyapunov analysis, we introduce Lemma~\ref{lm:lsvrp_lyap_second}, which provides a compensating term that cancels this quantity.

\subsection{Bounding the Second Term of the Lyapunov Function}

To complement the analysis, we derive a bound for the second term in the Lyapunov function, $\mathbb{E}\left[\| w_{k+1} - x_{k+1} \|^2\right]$.

\begin{lemma}\label{lm:lsvrp_lyap_second}
	For the iterates of Algorithm~\ref{alg:l_svrp}, and for any $\xi \in [0, 1]$ and $\zeta > 0$, the following inequality holds:
	\begin{equation}\label{eq:lsvrp_lyap_second}
		\begin{aligned}
			&\ExpBr{\|w_{k+1} - x_{k+1}\|^2 \mid x_k, w_k} \\
			&\leq (1 - p(1 - \xi))(1 + \zeta) \|w_k - x_k\|^2 \\
			& \quad  + (1 - p(1 - \xi))(1 + \zeta^{-1}) \|x_k - \bar{x}_{k+1}\|^2 \\
			&  \quad - p\xi \|w_k - \bar{x}_{k+1}\|^2 \\
			& \quad  + (1 - p) \ExpBr{\|\bar{x}_{k+1} - x_{k+1}\|^2 \mid x_k, w_k}.
		\end{aligned}
	\end{equation}
\end{lemma}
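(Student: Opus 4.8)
The plan is to peel off the two independent sources of randomness in $w_{k+1}$ — the Bernoulli coin flip and the sampling of $i_k$ — and then reduce the whole statement to an elementary Young-type inequality. First I would condition on $i_k$, which fixes $x_{k+1}$, and take expectation only over the coin: with probability $p$ we have $w_{k+1} = x_{k+1}$, contributing zero, and with probability $1-p$ we have $w_{k+1} = w_k$, contributing $\|w_k - x_{k+1}\|^2$. By the tower property this collapses the left-hand side to
\[
\ExpBr{\|w_{k+1} - x_{k+1}\|^2 \mid x_k, w_k} = (1-p)\,\ExpBr{\|w_k - x_{k+1}\|^2 \mid x_k, w_k}.
\]

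Next I would split the right-hand side around the conditional mean $\bar{x}_{k+1}$. Writing $w_k - x_{k+1} = (w_k - \bar{x}_{k+1}) + (\bar{x}_{k+1} - x_{k+1})$ and using that $\bar{x}_{k+1} = \ExpBr{x_{k+1}\mid x_k, w_k}$, so the cross term vanishes in expectation (the bias–variance decomposition), gives
\[
\ExpBr{\|w_k - x_{k+1}\|^2 \mid x_k, w_k} = \|w_k - \bar{x}_{k+1}\|^2 + \ExpBr{\|\bar{x}_{k+1} - x_{k+1}\|^2 \mid x_k, w_k}.
\]
Scaled by $(1-p)$, the variance term already matches the last term of the claimed bound, so the only remaining task is to control the deterministic quantity $(1-p)\|w_k - \bar{x}_{k+1}\|^2$.

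This is where the free parameter $\xi$ enters. I would rewrite the coefficient as $1-p = \bigl(1 - p(1-\xi)\bigr) - p\xi$, which manufactures exactly the negative term $-p\xi\|w_k - \bar{x}_{k+1}\|^2$ appearing in the statement — the compensating term flagged after Lemma~\ref{lm:lsvrp_lyap_first}, whose role is to cancel the troublesome inner product in the combined analysis. It then suffices to bound $\|w_k - \bar{x}_{k+1}\|^2$ itself: decomposing $w_k - \bar{x}_{k+1} = (w_k - x_k) + (x_k - \bar{x}_{k+1})$ and applying Young's inequality to the cross term with weight $\zeta$ yields
\[
\|w_k - \bar{x}_{k+1}\|^2 \leq (1+\zeta)\|w_k - x_k\|^2 + (1+\zeta^{-1})\|x_k - \bar{x}_{k+1}\|^2.
\]
Multiplying through by the factor $1 - p(1-\xi)$ reproduces the first two terms of the conclusion.

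There is no genuine obstacle here: each step is either a case analysis over independent randomness or a single application of Young's inequality. The only point requiring care is bookkeeping — performing the split $1-p = (1 - p(1-\xi)) - p\xi$ so that the surviving positive part $1 - p(1-\xi)$ multiplies both Young terms while the negative part stays attached to $\|w_k - \bar{x}_{k+1}\|^2$ — together with checking the sign constraint $1 - p(1-\xi) \geq 0$, which holds since $p(1-\xi) \leq p \leq 1$ for $\xi \in [0,1]$ and $p \in (0,1]$, so that multiplying the Young inequality by this factor preserves the inequality's direction.
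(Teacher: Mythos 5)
Your proposal is correct and follows essentially the same route as the paper's proof: conditioning on the coin flip to get the factor $(1-p)$, the bias--variance split around $\bar{x}_{k+1}$, the decomposition $1-p = (1-p(1-\xi)) - p\xi$, and Young's inequality with weight $\zeta$ on $\|w_k - x_k + x_k - \bar{x}_{k+1}\|^2$. No gaps.
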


\begin{proof}
	The proof follows directly. Consider that
	\begin{equation}\label{eq:lsvrp_aux2}
		\begin{aligned}
		&\ExpBr{\|w_{k+1} - x_{k+1}\|^2 | x_{k+1}, x_k, w_k} \\
		&= p \|x_{k+1} - x_{k+1}\|^2 + (1 - p)\|w_k - x_{k+1}\|^2 \\
		&= (1 - p)\|w_k - x_{k+1}\|^2.
		\end{aligned}
	\end{equation}
	The remainder involves a series of straightforward bounds and expansions, as detailed below:
	\begin{equation}\label{eq:lsvrp_second_lyapunov_term_aux_1}
	\begin{aligned}
		&\ExpBr{\|w_{k+1} - x_{k+1}\|^2 | x_k, w_k} \\ 
		& = \ExpBr{\ExpBr{\|w_{k+1} - x_{k+1}\|^2 | x_{k+1}, x_k, w_k} | x_k, w_k} \\
		&=(1 - p)\ExpBr{\|w_k - x_{k+1}\|^2 | x_k, w_k} \\
		& = (1 - p)\ExpBr{\|w_k - \bar{x}_{k+1} +  \bar{x}_{k+1} - x_{k+1}\|^2 | x_k, w_k} \\
		& =  (1 - p) \biggl[ \|w_k - \bar{x}_{k+1}\|^2 +  \ExpBr{\|\bar{x}_{k+1} - x_{k+1}\|^2 | x_k, w_k} \\
		&  \quad + 2\ExpBr{\lin{w_k - \bar{x}_{k+1}, \bar{x}_{k+1} - x_{k+1} }| x_k, w_k}\biggr],
	\end{aligned}
	\end{equation}
	where the first equality follows from the tower property, and the second equality is obtained using~\eqref{eq:lsvrp_aux2}.
	
	Noting that, due to the linearity of expectation and the definition $\bar{x}_{k+1} = \ExpBr{x_{k+1}| x_k, w_k}$, we have
	\begin{equation}\label{eq:lsvrp_second_lyapunov_term_aux_2}
	\begin{aligned}
		&\ExpBr{\lin{w_k - \bar{x}_{k+1}, \bar{x}_{k+1} - x_{k+1} }| x_k, w_k} \\
		&= \lin{w_k - \bar{x}_{k+1}, \ExpBr{\bar{x}_{k+1} - x_{k+1} | x_k, w_k}} \\
		&= \lin{w_k - \bar{x}_{k+1}, 0} = 0.
	\end{aligned}
	\end{equation}
	Substituting this equality into~\eqref{eq:lsvrp_second_lyapunov_term_aux_1}, we obtain
	\begin{eqnarray*}
		&&\ExpBr{\|w_{k+1} - x_{k+1}\|^2 | x_k, w_k}  \\
		& \overset{\eqref{eq:lsvrp_second_lyapunov_term_aux_1}+\eqref{eq:lsvrp_second_lyapunov_term_aux_2}}{=} &  (1 - p) [ \|w_k - \bar{x}_{k+1}\|^2 \\
		&& +  \ExpBr{\|\bar{x}_{k+1} - x_{k+1}\|^2 | x_k, w_k} ]\\
		& = & (1 - p(1 - \xi))  \|w_k - \bar{x}_{k+1}\|^2 \\
		& & - p\xi \|w_k - \bar{x}_{k+1}\|^2 \\
		& & + (1 - p) \ExpBr{\|\bar{x}_{k+1} - x_{k+1}\|^2 | x_k, w_k} \\
		& = & (1 - p(1 - \xi))  \|w_k - x_k + x_k - \bar{x}_{k+1}\|^2 \\
		& & - p\xi \|w_k - \bar{x}_{k+1}\|^2 \\
		& & + (1 - p) \ExpBr{\|\bar{x}_{k+1} - x_{k+1}\|^2 | x_k, w_k} \\
		& \leq & (1 - p(1 - \xi))(1 + \zeta) \|w_k - x_k\|^2 \\
		& & + (1 - p(1 - \xi))(1 + \zeta^{-1}) \|x_k - \bar{x}_{k+1}\|^2 \\
		& &  - p\xi \|w_k - \bar{x}_{k+1}\|^2 \\
		& & + (1 - p) \ExpBr{\|\bar{x}_{k+1} - x_{k+1}\|^2 | x_k, w_k}.
	\end{eqnarray*}
	In the final inequality, we utilize the fact that for any vectors $a, b \in \RR^d$ and any constant $\zeta > 0$, the following bound holds: $\|a + b\|^2 \leq (1 + \zeta)\|a\|^2 + (1 + \zeta^{-1})\|b\|^2$. This concludes the proof.
\end{proof}

The compensating term $- p\xi \| w_k - \bar{x}_{k+1} \|^2$ comes at the cost of two additional terms. By comparing equations~\eqref{eq:lsvrp_lyap_first} and~\eqref{eq:lsvrp_lyap_second}, we observe that many terms cancel each other out. This cancellation is the main idea underlying Theorem~\ref{thm:lsvrp_tech_conv}.

\subsection{Combined Lyapunov Analysis}

With the bounds established for the individual components of the Lyapunov function, we now combine these results to derive the overall contraction property.

\begin{theorem}\label{thm:lsvrp_tech_conv}
	Let Assumptions~\ref{as:hess_sim_finite_sum} (Hessian similarity) and~\ref{as:strong_cvx} (strong convexity) hold. For a given $\xi \in [0, 1]$ and constants $c > 0$, $\gamma > 0$, and  $\zeta > 0$, suppose the following inequalities are satisfied:
	\begin{gather}
		c \leq \frac{1}{(1 + \mu\gamma)(1 - p(1 - \xi))(1 + \zeta^{-1})}\label{eq:lsvrp_c_advanced},\\
		c p \xi \geq \frac{\delta^2}{\left( \mu + \frac{1}{\gamma}\right)^2(1 - c(1 - p))} \label{eq:lsvrp_stepsize_condition},\\
		1 - r  = (1 - p(1 - \xi))(1 + \zeta) \label{eq:lsvrp_contraction_governance}
	\end{gather}
	where $r$ is a positive constant, and $p \in (0, 1]$ is the probability parameter from Algorithm~\ref{alg:l_svrp}.
	
	Then, the iterates satisfy:
	\begin{equation*}
		\ExpBr{\Lambda_{k+1}} \leq \max\left\{\frac{1}{1 + \mu\gamma}, 1 - r\right\} \ExpBr{\Lambda_k}.
	\end{equation*}
\end{theorem}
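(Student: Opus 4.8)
The plan is to bound $\ExpBr{\Lambda_{k+1} \mid x_k, w_k}$ by combining the two lemmas and then choosing the free parameters so that every cross term cancels, leaving only the two geometrically decaying contributions $\frac{1}{1+\mu\gamma}\|x_k - x_\ast\|^2$ and $(1-r)\,c\|w_k - x_k\|^2$. Concretely, I would write $\ExpBr{\Lambda_{k+1} \mid x_k, w_k} = \ExpBr{\|x_{k+1}-x_\ast\|^2 \mid x_k, w_k} + c\,\ExpBr{\|w_{k+1}-x_{k+1}\|^2 \mid x_k, w_k}$, insert the bound from Lemma~\ref{lm:lsvrp_lyap_first} for the first summand and $c$ times the bound from Lemma~\ref{lm:lsvrp_lyap_second} for the second, and group the resulting terms by the quadratic they carry.

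Three groups then appear. First, the terms in $\|\bar{x}_{k+1}-x_k\|^2$ are $-\frac{1}{1+\mu\gamma}$ (from Lemma~\ref{lm:lsvrp_lyap_first}) and $+c(1-p(1-\xi))(1+\zeta^{-1})$ (from Lemma~\ref{lm:lsvrp_lyap_second}); inequality~\eqref{eq:lsvrp_c_advanced} is exactly the statement that their sum is nonpositive, so this group is discarded. Second, the terms in $\ExpBr{\|x_{k+1}-\bar{x}_{k+1}\|^2 \mid x_k, w_k}$ combine to the coefficient $-(1-c(1-p))$, which I keep as a reservoir of negativity (noting the implicit requirement $1-c(1-p)>0$). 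Third, there remains the compensating term $-cp\xi\|w_k-\bar{x}_{k+1}\|^2$ together with the leftover inner product $-\frac{2}{\mu+1/\gamma}\ExpBr{\lin{\nabla\psi_{i_k}(\bar{x}_{k+1})-\nabla\psi_{i_k}(w_k),\, x_{k+1}-\bar{x}_{k+1}}\mid x_k,w_k}$.

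The crux is controlling this inner product. Writing $a = \nabla\psi_{i_k}(\bar{x}_{k+1})-\nabla\psi_{i_k}(w_k)$ and $b = x_{k+1}-\bar{x}_{k+1}$ and applying Young's inequality $-2\lin{a,b}\le \alpha\|a\|^2 + \alpha^{-1}\|b\|^2$ for a parameter $\alpha>0$ to be chosen, the prefactor $\tfrac{1}{\mu+1/\gamma}$ turns the inner product into $\frac{\alpha}{\mu+1/\gamma}\ExpBr{\|a\|^2\mid x_k,w_k} + \frac{1}{\alpha(\mu+1/\gamma)}\ExpBr{\|b\|^2\mid x_k,w_k}$. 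Since $i_k$ is uniform and $\bar{x}_{k+1}, w_k$ are $(x_k,w_k)$-measurable, Assumption~\ref{as:hess_sim_finite_sum} (recognizing $\nabla\psi_i = \nabla f_i - \nabla f$) gives $\ExpBr{\|a\|^2\mid x_k,w_k}=\avein\|\nabla f_i(\bar{x}_{k+1})-\nabla f_i(w_k)-[\nabla f(\bar{x}_{k+1})-\nabla f(w_k)]\|^2 \le \delta^2\|\bar{x}_{k+1}-w_k\|^2$. The key is to pick $\alpha$ so both coefficients are absorbed by the two negative reservoirs at once: choosing $\alpha = \frac{1}{(\mu+1/\gamma)(1-c(1-p))}$ makes the $\|b\|^2$-coefficient exactly $1-c(1-p)$, cancelling the second group, while the $\|\bar{x}_{k+1}-w_k\|^2$-coefficient becomes $\frac{\delta^2}{(\mu+1/\gamma)^2(1-c(1-p))}$, which by~\eqref{eq:lsvrp_stepsize_condition} is at most $cp\xi$ and hence absorbed by the compensating term.

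After these cancellations only $\frac{1}{1+\mu\gamma}\|x_k-x_\ast\|^2 + c(1-p(1-\xi))(1+\zeta)\|w_k-x_k\|^2$ survives; invoking~\eqref{eq:lsvrp_contraction_governance} rewrites the second coefficient as $c(1-r)$, and bounding both factors by their maximum yields $\ExpBr{\Lambda_{k+1}\mid x_k,w_k}\le \max\{\frac{1}{1+\mu\gamma},1-r\}\,\Lambda_k$, after which the tower property finishes the proof. The main obstacle is precisely this simultaneous tuning of the single parameter $\alpha$: it must make one coefficient equal $1-c(1-p)$ while keeping the other below $cp\xi$, and it is exactly the joint feasibility of conditions~\eqref{eq:lsvrp_c_advanced}--\eqref{eq:lsvrp_stepsize_condition} (together with $1-c(1-p)>0$) that guarantees such an $\alpha$ exists.
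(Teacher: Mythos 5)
Your proposal is correct and follows essentially the same route as the paper's proof: the same term-by-term grouping after summing Lemma~\ref{lm:lsvrp_lyap_first} and $c$ times Lemma~\ref{lm:lsvrp_lyap_second}, with condition~\eqref{eq:lsvrp_c_advanced} killing the $\|\bar{x}_{k+1}-x_k\|^2$ group, and Young's inequality with your $\alpha = \frac{1}{(\mu+1/\gamma)(1-c(1-p))}$ being exactly the paper's application of Lemma~\ref{lm:scalar_prod_bound} before invoking Assumption~\ref{as:hess_sim_finite_sum} and condition~\eqref{eq:lsvrp_stepsize_condition}. The only cosmetic difference is that you flag $1-c(1-p)>0$ as an extra requirement, whereas the paper derives it directly from~\eqref{eq:lsvrp_c_advanced} via $c < \frac{1}{1-p}$.
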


The proof is deferred to the appendix.

\subsection{Interpretation of Results}

While \Cref{thm:lsvrp_tech_conv} may initially appear to impose numerous conditions, many of these can be addressed in a straightforward and natural manner. To clarify this, we offer the following corollary, which simplifies and resolves several of these conditions.

\begin{corollary}\label{crl:lsvrp_conv}
	Let~\Cref{as:hess_sim_finite_sum} (Hessian similarity) and ~\Cref{as:strong_cvx} (strong convexity) hold. Further, choose $\gamma > 0$ and $p \in (0, 1]$ such that
	\begin{equation}\label{eq:lsvrp_gamma_cond}
		\left[\frac{2(3 - p)^2}{p^2 (p + 1)} \right]\delta^2 \gamma^2 \leq 1 + \mu\gamma.
	\end{equation}
	Then, for iterates of~\Cref{alg:l_svrp}, it holds that
	\begin{equation}\label{eq:contraction_rate_explicit}
		\ExpBr{\|x_K - x_\ast\|^2} \\
		\leq \max\left\{\frac{1}{1 + \mu\gamma}, 1 - \frac{p}{4}\right\}^K \|x_0 - x_\ast\|^2.
	\end{equation}
\end{corollary}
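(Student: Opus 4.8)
The plan is to specialize the free parameters $\xi$ and $\zeta$ in Theorem~\ref{thm:lsvrp_tech_conv}, fix $c$ at the largest value permitted by \eqref{eq:lsvrp_c_advanced}, and then show that the single hypothesis \eqref{eq:lsvrp_gamma_cond} is exactly what is needed to force \eqref{eq:lsvrp_stepsize_condition} while driving the contraction factor governed by \eqref{eq:lsvrp_contraction_governance} down to $1 - p/4$.

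First I would set $\xi = \tfrac12$. With this choice $1 - p(1-\xi) = (2-p)/2$, and I would pin down $\zeta$ by demanding $r = p/4$ in \eqref{eq:lsvrp_contraction_governance}: solving $(1-p/2)(1+\zeta) = 1 - p/4$ gives $\zeta = \frac{p}{2(2-p)}$ and hence $1 + \zeta^{-1} = \frac{4-p}{p}$. One checks $\xi\in[0,1]$ and $\zeta>0$, so these are admissible. Taking equality in \eqref{eq:lsvrp_c_advanced} then yields the explicit constant
\[
c = \frac{2p}{(1+\mu\gamma)(2-p)(4-p)} > 0.
\]

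Second, I would verify that \eqref{eq:lsvrp_gamma_cond} implies \eqref{eq:lsvrp_stepsize_condition}. Substituting $\xi=\tfrac12$ and the value of $c$, and using $(\mu + 1/\gamma)^2 = (1+\mu\gamma)^2/\gamma^2$, condition \eqref{eq:lsvrp_stepsize_condition} becomes equivalent to $\delta^2\gamma^2 \leq \frac{p^2(1+\mu\gamma)(1 - c(1-p))}{(2-p)(4-p)}$. Since $1+\mu\gamma \geq 1$, I can lower bound $1 - c(1-p) \geq 1 - \frac{2p(1-p)}{(2-p)(4-p)} = \frac{8 - 8p + 3p^2}{(2-p)(4-p)}$, where the numerator is strictly positive as its discriminant $64-96$ is negative. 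It therefore suffices that $\frac{p+1}{2(3-p)^2} \leq \frac{8-8p+3p^2}{(2-p)^2(4-p)^2}$, i.e., the polynomial inequality $2(3-p)^2(8-8p+3p^2) \geq (p+1)(2-p)^2(4-p)^2$, which I would confirm for all $p\in(0,1]$ by a routine expansion; using the identity $(3-p)^2 = (2-p)(4-p)+1$ it reduces to $(1-p)u^2 - 4uv + 2u - 4v \geq 0$ with $u=(2-p)(4-p)$ and $v=p(1-p)$, manifestly nonnegative on the interval. Given this, \eqref{eq:lsvrp_gamma_cond} guarantees \eqref{eq:lsvrp_stepsize_condition}.

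Finally, with all three conditions of Theorem~\ref{thm:lsvrp_tech_conv} in force and $1 - r = 1 - p/4$, I would invoke the one-step contraction $\ExpBr{\Lambda_{k+1}} \leq \max\{\tfrac{1}{1+\mu\gamma},\, 1 - \tfrac{p}{4}\}\ExpBr{\Lambda_k}$, initialize $w_0 = x_0$ so that $\Lambda_0 = \|x_0 - x_\ast\|^2$, unroll the recursion over $K$ steps, and discard the nonnegative term $c\|w_K - x_K\|^2$ from $\Lambda_K$ to obtain \eqref{eq:contraction_rate_explicit}. I expect the main obstacle to be the second step, namely threading the $1 - c(1-p)$ factor through and checking the resulting polynomial inequality so that the clean hypothesis \eqref{eq:lsvrp_gamma_cond} is genuinely sufficient; the parameter instantiation and the final unrolling are routine.
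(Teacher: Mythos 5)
Your proposal is correct and follows essentially the same route as the paper: specialize Theorem~\ref{thm:lsvrp_tech_conv} with $\xi = \tfrac12$, $r = \tfrac{p}{4}$ (hence $\zeta = \tfrac{p}{4-2p}$), verify the three conditions, and unroll the contraction with $w_0 = x_0$. The only deviation is that you take $c$ at equality in \eqref{eq:lsvrp_c_advanced}, with denominator $(2-p)(4-p)$, whereas the paper uses the slightly smaller $c = \tfrac{2p}{(3-p)^2(1+\mu\gamma)}$, which gives $c\le\tfrac12$ and $1-c(1-p)\ge\tfrac{1+p}{2}$ immediately so that \eqref{eq:lsvrp_gamma_cond} drops out exactly, sparing the extra polynomial inequality you must check (which does hold on $(0,1]$).
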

\begin{proof}
	In the context of~\Cref{thm:lsvrp_tech_conv}, we specify
	\begin{equation}\label{eq:lsvrp_many_defs}
		c = \frac{2p}{(3 - p)^2} \frac{1}{1 + \mu\gamma}, \quad \xi = \frac12, \quad r = \frac{p}{4}.
	\end{equation}
	We now verify that this value of $c$ meets the conditions outlined in~\Cref{thm:lsvrp_tech_conv}.
	The corresponding value of $\zeta$ is obtained via~\Cref{eq:lsvrp_contraction_governance}:
	\begin{eqnarray}
		1 - \frac{p}{4} & \overset{\eqref{eq:lsvrp_contraction_governance}+\eqref{eq:lsvrp_many_defs}}{=} & \left(1 - \frac{p}{2}\right)(1 + \zeta) \notag\\
		&=& 1 - \frac{p}{2} + \zeta \left(1 - \frac{p}{2}\right) \Leftrightarrow\notag\\
		\frac{p}{4} &= &\zeta \left(1 - \frac{p}{2}\right) \Leftrightarrow\notag\\
		\zeta & = & \frac{p}{4 - 2p} \label{eq:lsvrp_zeta_value}.
	\end{eqnarray}
	Drawing from~\eqref{eq:lsvrp_c_advanced}, we obtain the following expression, which simplifies the inequality for easier interpretation:
	\begin{eqnarray*}
		c &\overset{\eqref{eq:lsvrp_c_advanced}}{\leq} &\frac{1}{(1 + \mu\gamma)(1 - p(1 - \xi))(1 + \zeta^{-1})} \\
		& \overset{\eqref{eq:lsvrp_many_defs}}{=} & \frac{1}{(1 + \mu\gamma)\left(1 - \frac{p}{2}\right)(1 + \zeta^{-1})} \\
		& \overset{\eqref{eq:lsvrp_zeta_value}}{=} &\frac{1}{(1 + \mu\gamma)\left(1 - \frac{p}{2}\right)\left(\frac{4}{p} - 1\right)} \\
		& = & \frac{2p}{(1 + \mu\gamma)(p^2 - 6p + 8)}.
	\end{eqnarray*}
	We proceed to verify that 
	\begin{align*}
	c &\overset{\eqref{eq:lsvrp_many_defs}}{=} \frac{2p}{(3 - p)^2} \frac{1}{1 + \mu\gamma} \leq  \frac{2p}{(1 + \mu\gamma)(p^2 - 6p + 8)}.
	\end{align*}
	Given that $p \in [0, 1]$, the following chain of inequalities holds:
	\begin{equation}\label{eq:lsvrp_tech_bound_1_p}
	\begin{aligned}
		c &\overset{\eqref{eq:lsvrp_many_defs}}{=}  \frac{2p}{(3 - p)^2} \frac{1}{1 + \mu\gamma} \\
		&\leq  \frac{2}{(3 - p)^2}\frac{1}{1 + \mu\gamma} \leq  \frac{2}{4}\frac{1}{1 + \mu\gamma} \leq \frac12.
	\end{aligned}
	\end{equation}
	The final step is to confirm that the stepsize condition given in~\eqref{eq:lsvrp_gamma_cond}  is sufficient to satisfy the remaining condition~\eqref{eq:lsvrp_stepsize_condition}:
	\begin{eqnarray*}
		& & \frac{\delta^2}{\left( \mu + \frac{1}{\gamma}\right)^2} \overset{\eqref{eq:lsvrp_stepsize_condition}}{\leq} \frac12 c p (1 - c(1 - p)) \\
		& \overset{\eqref{eq:lsvrp_tech_bound_1_p}}{\Leftarrow}&\frac{\delta^2}{\left( \mu + \frac{1}{\gamma}\right)^2} \leq \frac14 c p (p+1) \\
		& \overset{\eqref{eq:lsvrp_many_defs}}{\Leftrightarrow}& \frac{\delta^2\gamma^2}{\left( 1 + \mu\gamma\right)^2} \leq \frac{p^2 (p + 1)}{2(3 - p)^2} \frac{1}{1 + \mu\gamma}\\
		& \Leftrightarrow & \delta^2 \gamma^2 \leq \left[\frac{p^2 (p + 1)}{2(3 - p)^2} \right] (1 + \mu\gamma) \\
		& \Leftrightarrow & \left[\frac{2(3 - p)^2}{p^2 (p + 1)} \right]\delta^2 \gamma^2 \leq 1 + \mu\gamma.
	\end{eqnarray*}
	Unrolling the contraction recurrence, we finish the proof.
\end{proof}

We also note a similar result can be obtained for convex functions.

\begin{corollary}\label{crl:lsvrp_conv_cvx}
	Let~\Cref{as:hess_sim_finite_sum} (Hessian similarity) and~\Cref{as:strong_cvx} with $\mu = 0$ (convexity) hold. Let us denote $x_\ast \in \Argmin f$ and $f^\ast \eqdef \min\limits_{x\in\RR^d} f(x)$. Choose $\gamma > 0$ and $p \in (0, 1]$ such that
	\begin{equation}\label{eq:lsvrp_gamma_cond_copy}
		\left[\frac{2(3 - p)^2}{p^2 (p + 1)} \right]\delta^2 \gamma^2 \leq 1 + \mu\gamma.
	\end{equation}
	Then, for iterates of~\Cref{alg:l_svrp}, it holds that
	\begin{equation}
		\frac{1}{K}\sum\limits_{k=1}^{K} \ExpBr{{f(\bar{x}_{k})}} - f^\ast \leq \frac{\|x_0 -x_\ast\|^2}{2 \gamma K}.
	\end{equation}
\end{corollary}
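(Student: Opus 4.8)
The plan is to re-run the proof of Theorem~\ref{thm:lsvrp_tech_conv} with $\mu = 0$, but to retain an optimality-gap term that the proof of Lemma~\ref{lm:lsvrp_lyap_first} discards. First I would re-derive the one-step bound of Lemma~\ref{lm:lsvrp_lyap_first} without throwing away the function value. Writing the update via~\eqref{eq:prox_op_equiv} as $x_{k+1} = x_k - \gamma g_k$ with $g_k = \nabla f_{i_k}(x_{k+1}) - \nabla \psi_{i_k}(w_k)$, expanding $\|x_{k+1} - x_\ast\|^2$ and substituting $x_k - x_\ast = (x_{k+1}-x_\ast) + \gamma g_k$ gives the exact identity
\[
\|x_{k+1} - x_\ast\|^2 = \|x_k - x_\ast\|^2 - 2\gamma\lin{g_k,\, x_{k+1}-x_\ast} - \|x_{k+1}-x_k\|^2.
\]
Applying convexity of $f_{i_k}$ at $x_{k+1}$ (Assumption~\ref{as:strong_cvx} with $\mu=0$) yields $\lin{\nabla f_{i_k}(x_{k+1}), x_{k+1}-x_\ast} \geq f_{i_k}(x_{k+1}) - f_{i_k}(x_\ast)$, and a second use of convexity of $f_{i_k}$ at the deterministic point $\bar{x}_{k+1}$ lower-bounds $f_{i_k}(x_{k+1})$ by $f_{i_k}(\bar x_{k+1}) + \lin{\nabla f_{i_k}(\bar x_{k+1}), x_{k+1}-\bar x_{k+1}}$, which is what lets me convert the $i_k$-dependent $f_{i_k}(x_{k+1})$ into a clean $f(\bar x_{k+1})$ after taking $\ExpBr{\cdot \mid x_k, w_k}$.

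Taking the conditional expectation then rests on three facts: $\bar x_{k+1}$ is deterministic given $(x_k,w_k)$, so $\ExpBr{f_{i_k}(\bar x_{k+1}) \mid x_k, w_k} = f(\bar x_{k+1})$ and $\ExpBr{f_{i_k}(x_\ast) \mid x_k,w_k} = f^\ast$; the estimator is unbiased, $\ExpBr{\nabla \psi_{i_k}(w_k) \mid x_k,w_k} = 0$ and $\ExpBr{x_{k+1} \mid x_k,w_k} = \bar x_{k+1}$, which collapses the $\bar x_{k+1}-x_\ast$ and $\nabla f(\bar x_{k+1})$ contributions and regroups the two inner products into $\nabla \psi_{i_k}(\bar x_{k+1}) - \nabla \psi_{i_k}(w_k)$; and $\ExpBr{\|x_{k+1}-x_k\|^2 \mid x_k,w_k} = \ExpBr{\|x_{k+1}-\bar x_{k+1}\|^2 \mid x_k,w_k} + \|\bar x_{k+1}-x_k\|^2$ by the bias–variance split. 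The outcome is exactly the right-hand side of~\eqref{eq:lsvrp_lyap_first} at $\mu=0$, augmented by the extra nonpositive term $-2\gamma\big(f(\bar x_{k+1}) - f^\ast\big)$; equivalently, Lemma~\ref{lm:lsvrp_lyap_first} is this inequality after dropping that gap.

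With this strengthened first-term bound in hand, I would add it to $c$ times Lemma~\ref{lm:lsvrp_lyap_second} and make precisely the choices $c,\ \xi = \tfrac12,\ \zeta,\ r = \tfrac{p}{4}$ from~\eqref{eq:lsvrp_many_defs} under the stepsize condition~\eqref{eq:lsvrp_gamma_cond_copy}, exactly as verified in the proof of Corollary~\ref{crl:lsvrp_conv}. The cancellation of the inner-product term (via Assumption~\ref{as:hess_sim_finite_sum} and condition~\eqref{eq:lsvrp_stepsize_condition}) and of the $\|x_{k+1}-\bar x_{k+1}\|^2$, $\|\bar x_{k+1}-x_k\|^2$ and $\|w_k-\bar x_{k+1}\|^2$ terms is identical to Theorem~\ref{thm:lsvrp_tech_conv}; the only new feature is that the nonpositive $-2\gamma(f(\bar x_{k+1})-f^\ast)$ rides along untouched. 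Since $\mu=0$ makes the factor on $\|x_k-x_\ast\|^2$ equal to one while $(1-r)c \leq c$, the combination produces
\[
\ExpBr{\Lambda_{k+1}} \leq \ExpBr{\Lambda_k} - 2\gamma\,\ExpBr{f(\bar x_{k+1}) - f^\ast}.
\]

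Finally I would telescope this descent over $k = 0,\dots,K-1$. Rearranging gives $2\gamma\sum_{k=0}^{K-1}\ExpBr{f(\bar x_{k+1})-f^\ast} \leq \Lambda_0 - \ExpBr{\Lambda_K} \leq \Lambda_0$; reindexing the sum to run over $k=1,\dots,K$, dividing by $2\gamma K$, and using the standard initialization $w_0 = x_0$ (so that $\Lambda_0 = \|x_0-x_\ast\|^2$) yields the claimed $\tfrac{1}{K}\sum_{k=1}^K \ExpBr{f(\bar x_k)} - f^\ast \leq \tfrac{\|x_0-x_\ast\|^2}{2\gamma K}$. The main obstacle is the first step: correctly keeping the optimality gap while handling the dependence of $x_{k+1}$ on the sampled index $i_k$ inside $f_{i_k}(x_{k+1})$—this is what forces the second convexity step at $\bar x_{k+1}$ together with the unbiasedness arguments. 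Once that strengthened bound is secured, the combination and the telescoping are essentially a re-reading of the already-proved Theorem~\ref{thm:lsvrp_tech_conv} and a routine summation.
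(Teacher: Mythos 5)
Your proposal is correct and follows essentially the same route as the paper: keep the optimality-gap term $-\tfrac{2}{\mu+1/\gamma}\,(f(\bar{x}_{k+1})-f^\ast)$ that Lemma~\ref{lm:lsvrp_lyap_first} discards (the paper packages this as Lemma~\ref{lm:lsvrp_lyap_first_amended} and Theorem~\ref{thm:lsvrp_tech_conv_amended}), reuse the parameter choices of Corollary~\ref{crl:lsvrp_conv} under the same stepsize condition, and telescope the resulting descent inequality. Your direct expansion of $\|x_{k+1}-x_\ast\|^2$ with two applications of convexity is just a hands-on re-derivation of the paper's technical Lemma~\ref{lm:lsvrp_technical_lemma} at $y=\bar{x}_{k+1}$ and $\mu=0$, and you correctly make explicit the initialization $w_0=x_0$ that the paper uses implicitly when setting $\Lambda_0=\|x_0-x_\ast\|^2$.
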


The final step involves specifying an appropriate stepsize \(\gamma\) and deriving the corresponding iteration complexity estimate. This is addressed in~\Cref{crl:lsvrp_conv2}, with the proof deferred to the appendix.

\begin{corollary}\label{crl:lsvrp_conv2}
   Let Assumptions~\ref{as:hess_sim_finite_sum} (Hessian similarity) and ~\ref{as:strong_cvx} (strong convexity) hold. Define the condition number $\kappa$ as
    $
    \kappa \eqdef \frac{\delta}{\mu}.
    $
     If the stepsize is chosen as
	\begin{equation}\label{eq:stepsize_theory}
	\begin{aligned}
	\gamma &= \frac{1}{\delta} \frac{p}{3-p}\sqrt{\frac{p+1}{2}} = \Theta\left(\frac{p}{\delta}\right),
	\end{aligned}
	\end{equation}
	and the number of iterations of~\Cref{alg:l_svrp} satisfies
	\begin{align*}
		K &\geq \left(1 + \frac{1}{p}\left(\frac{3\delta}{\mu}  + 4 \right)\right) \log\left(\frac{\|x_0 - x_\ast\|^2}{\varepsilon} \right) \\
		&= \widetilde{\cO}\left(\frac{\kappa + 1}{p}\right),
	\end{align*}
	then the following bound holds:
	$$
	\ExpBr{\|x_K - x_\ast\|^2} \leq \varepsilon.
	$$
\end{corollary}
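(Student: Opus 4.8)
The plan is to derive everything from \Cref{crl:lsvrp_conv}, so the proof reduces to two steps: first check that the prescribed stepsize is admissible, and then convert the resulting linear rate into the claimed iteration count. For the first step I would substitute $\gamma = \frac{1}{\delta}\frac{p}{3-p}\sqrt{\frac{p+1}{2}}$ from \eqref{eq:stepsize_theory} into the stepsize condition \eqref{eq:lsvrp_gamma_cond}. A direct computation gives $\delta^2\gamma^2 = \frac{p^2(p+1)}{2(3-p)^2}$, so the bracketed prefactor $\frac{2(3-p)^2}{p^2(p+1)}$ cancels it exactly and the left-hand side of \eqref{eq:lsvrp_gamma_cond} equals $1$. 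Since $1 \le 1 + \mu\gamma$ always holds, the hypothesis of \Cref{crl:lsvrp_conv} is met (in fact this is precisely the largest stepsize allowed once the $\mu\gamma$ slack is discarded), and \eqref{eq:contraction_rate_explicit} yields $\ExpBr{\|x_K - x_\ast\|^2} \le \rho^K\|x_0-x_\ast\|^2$ with $\rho \eqdef \max\left\{\frac{1}{1+\mu\gamma}, 1-\frac p4\right\}$.

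For the second step I would turn the geometric decay into an iteration complexity. To guarantee $\rho^K\|x_0-x_\ast\|^2 \le \varepsilon$ it suffices that $K \ge \frac{1}{\log(1/\rho)}\log\frac{\|x_0-x_\ast\|^2}{\varepsilon}$, and using the elementary inequality $\log(1/\rho) \ge 1-\rho$ it is enough to require $K \ge \frac{1}{1-\rho}\log\frac{\|x_0-x_\ast\|^2}{\varepsilon}$. Because $\rho$ is a maximum of two quantities, $1-\rho = \min\left\{\frac{\mu\gamma}{1+\mu\gamma}, \frac p4\right\}$, whence $\frac{1}{1-\rho} = \max\left\{1+\frac{1}{\mu\gamma}, \frac 4p\right\}$. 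It then remains only to bound this expression by the clean quantity $1 + \frac1p\left(\frac{3\delta}{\mu}+4\right)$ appearing in the statement.

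The hard part will be the final algebraic simplification. Substituting the stepsize gives $\frac{1}{\mu\gamma} = \frac\delta\mu\cdot\frac{3-p}{p}\sqrt{\frac{2}{p+1}}$, so I would bound the $p$-dependent factor $(3-p)\sqrt{\tfrac{2}{p+1}}$ by a constant over $p\in(0,1]$ (it is monotone in $p$, which makes the estimate routine), turning the first branch of the max into $1 + \frac{c\delta}{\mu p}$ for a numerical constant $c$. Combining the two branches via $\max\{A,B\}\le A+B$ then absorbs the $\frac 4p$ term and yields a bound of the form $1 + \frac1p\left(\frac{c\delta}{\mu}+4\right)$; matching it to the stated coefficient $\frac{3\delta}{\mu}$ is exactly where one must track the constants produced by the $\sqrt{\tfrac{2}{p+1}}$ and $(3-p)$ factors carefully. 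Finally, recognizing $\frac\delta\mu = \kappa$ gives the advertised $\widetilde{\cO}\left(\frac{\kappa+1}{p}\right)$ rate, and unrolling the recurrence completes the proof.
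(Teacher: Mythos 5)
Your proposal follows essentially the same route as the paper's proof: substitute $\gamma = \frac{1}{\delta}\frac{p}{3-p}\sqrt{\frac{p+1}{2}}$ into \eqref{eq:lsvrp_gamma_cond} (where it saturates the condition up to the discarded $\mu\gamma$ slack), invoke \eqref{eq:contraction_rate_explicit}, and convert the contraction factor into an iteration count via $\frac{1}{1-\rho} \le 1 + \frac{1}{\mu\gamma} + \frac{4}{p}$ with $\frac{1}{\mu\gamma} = \frac{\delta}{\mu}\frac{3-p}{p}\sqrt{\frac{2}{p+1}}$. One caveat on the step you defer to ``careful constant tracking'': the factor $(3-p)\sqrt{\frac{2}{p+1}}$ is decreasing on $(0,1]$ with supremum $3\sqrt{2}\approx 4.24$ as $p\to 0$, so it cannot be bounded by $3$ uniformly in $p$; the paper's own proof makes exactly this replacement, so in the small-$p$ regime (e.g.\ $p=1/n$) the stated coefficient $\frac{3\delta}{\mu}$ should really be $\frac{3\sqrt{2}\,\delta}{\mu}$ --- a harmless constant-factor slip that affects neither your plan's structure nor the $\widetilde{\cO}\left(\frac{\kappa+1}{p}\right)$ conclusion.
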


\begin{remark}
When $p = \frac{1}{n}$, we obtain the iteration complexity
$$
O\left((n \kappa + n) \log{\frac{\|x_0 - x_\ast\|^2}{\varepsilon}}\right).
$$
Compared to the original complexity $\widetilde{\cO}\left(\kappa^2 + n\right)$ by~\citet{khaled2023faster}, our result is better when $\kappa \geq n$.
\end{remark}
\begin{remark}
As noted by \citet{khaled2023faster}, in the communication model discussed in their paper, the iteration complexity and communication complexity of \algname{L-SVRP} are asymptotically the same.
\end{remark}

\section{EXPERIMENTS}\label{sec:exps}
To evaluate the tightness of our theoretical convergence rate, we conduct experiments across a grid of 48 parameter configurations. Specifically, we vary the number of functions $n$ among the set $\{10, 25, 50, 100, 250, 500\}$ and adjust the Hessian heterogeneity parameter $\delta^2$ over eight values exponentially increasing from $10^0$ to $10^{7}$. Due to the stochastic nature of the data generation process, we do not obtain fixed values of $\delta^2$ for each setting; however, the generated values are approximately similar across runs.

For our experiments, we utilize an internal computational cluster node equipped with 10 CPUs. The code is implemented in Python and leverages standard libraries, including Numpy~\citep{harris2020array} and Scipy~\citep{scipy}.

We perform our tests on quadratic functions defined as follows:

\begin{equation}\label{eq:quad_f}
	f_i(x) = \frac{1}{2} x^\top \bA_i x + b_i^\top x,
\end{equation}

where each $\bA_i \in \mathbb{S}_{++}^d$ is a symmetric positive definite matrix of dimension $d$, and $b_i \in \mathbb{R}^d$ is a vector of the same dimension. In all experiments, we set $d = 100$. For the sake of simplicity, we set \(\avein b_i = 0\). It is important to note, however, that this simplification prevents the interpolation regime from being satisfied.

By \Cref{as:hess_sim_finite_sum}, if the parameter $\delta^2$ exists, it can be explicitly expressed as follows:

\begin{equation*}
\delta^2 =\sup_{\substack{x, y \in \mathbb{R}^d \\ x \neq y}} \frac{\sum\limits_{i=1}^n \| \nabla f_i(x) - \nabla f_i(y) - \nabla f(x) + \nabla f(y)\|^2}{\|x - y\|^2}.
\end{equation*}

We demonstrate that, for the quadratic problem~\eqref{eq:quad_f}, this quantity is finite:

\begin{align*}
	&\sup_{\substack{x, y \in \mathbb{R}^d \\ x \neq y}} \frac{\avein \| \nabla f_i(x) - \nabla f_i(y) - \nabla f(x) + \nabla f(y)\|^2}{\|x - y\|^2} \\
	& =  \sup_{\substack{x, y \in \mathbb{R}^d \\ x \neq y}}\frac{\avein \|\bA_i (x - y) - \bar{\bA} (x - y)\|^2}{\|x - y\|^2} \\
	& =  \sup_{\substack{x, y \in \mathbb{R}^d \\ x \neq y}} \frac{\avein \|(\bA_i - \bar{\bA}) (x - y)\|^2}{\|x - y\|^2} \\ 
	& =  \sup\limits_{\substack{z\in\RR^d \\ z \neq 0 }} \frac{\avein \|(\bA_i - \bar{\bA}) z\|^2}{\|z\|^2}\\
	& = \sup\limits_{\substack{z\in\RR^d \\ z \neq 0 }} \frac{z^\top \left[\avein (\bA_i - \bar{\bA})^\top (\bA_i - \bar{\bA}) \right] z}{\|z\|^2},
\end{align*}
where, in the second equality, we omit the terms \(b_i\) and \(b\) as they cancel out, thereby saving space in the paper. 

Recalling that all matrices under consideration are symmetric, we have:

\begin{align*}
	\delta^2 & = \sup\limits_{x\in\RR^d} \frac{z^\top \left[\avein (\bA_i - \bar{\bA})^\top (\bA_i - \bar{\bA}) \right] z}{\|z\|^2}\\
	& =  \sup\limits_{x\in\RR^d} \frac{z^\top \left[\avein (\bA_i - \bar{\bA}) (\bA_i - \bar{\bA}) \right] z}{\|z\|^2} \\
& =  \sup\limits_{x\in\RR^d} \frac{z^\top \left[\avein \bA_i^2 - \bar{\bA}^2 \right] z}{\|z\|^2} \\
& =  \left\| \avein \bA_i^2 - \bar{\bA}^2\right\|,
\end{align*}
where $\bar{\bA}$ represents the average matrix $\frac{1}{n} \sum_{i=1}^n \bA_i$, and the final equality follows from the definition of the matrix spectral norm. Consequently, for quadratic problems, the parameter 
$$
\delta^2~= ~\left\| \avein \bA_i^2 - \bar{\bA}^2\right\|
$$
 is finite and well-defined. This result was previously established in~\citep{szlendak2022permutation}.

To generate the matrices \( \bA_i \), we create random orthogonal matrices \( \bQ_i \) and random diagonal matrices whose diagonal entries are uniformly sampled from the interval \( [1, s] \). The parameter \( s \) varies over the set $\{5, 10, 50, 100, 500, 1000, 5000, 10000\}$. This generation process ensures that the strong convexity assumption holds with a constant $ \mu =1 $, while allowing $\delta^2$ to range from relatively small to large values.

\begin{figure}
	\includegraphics[width=\linewidth]{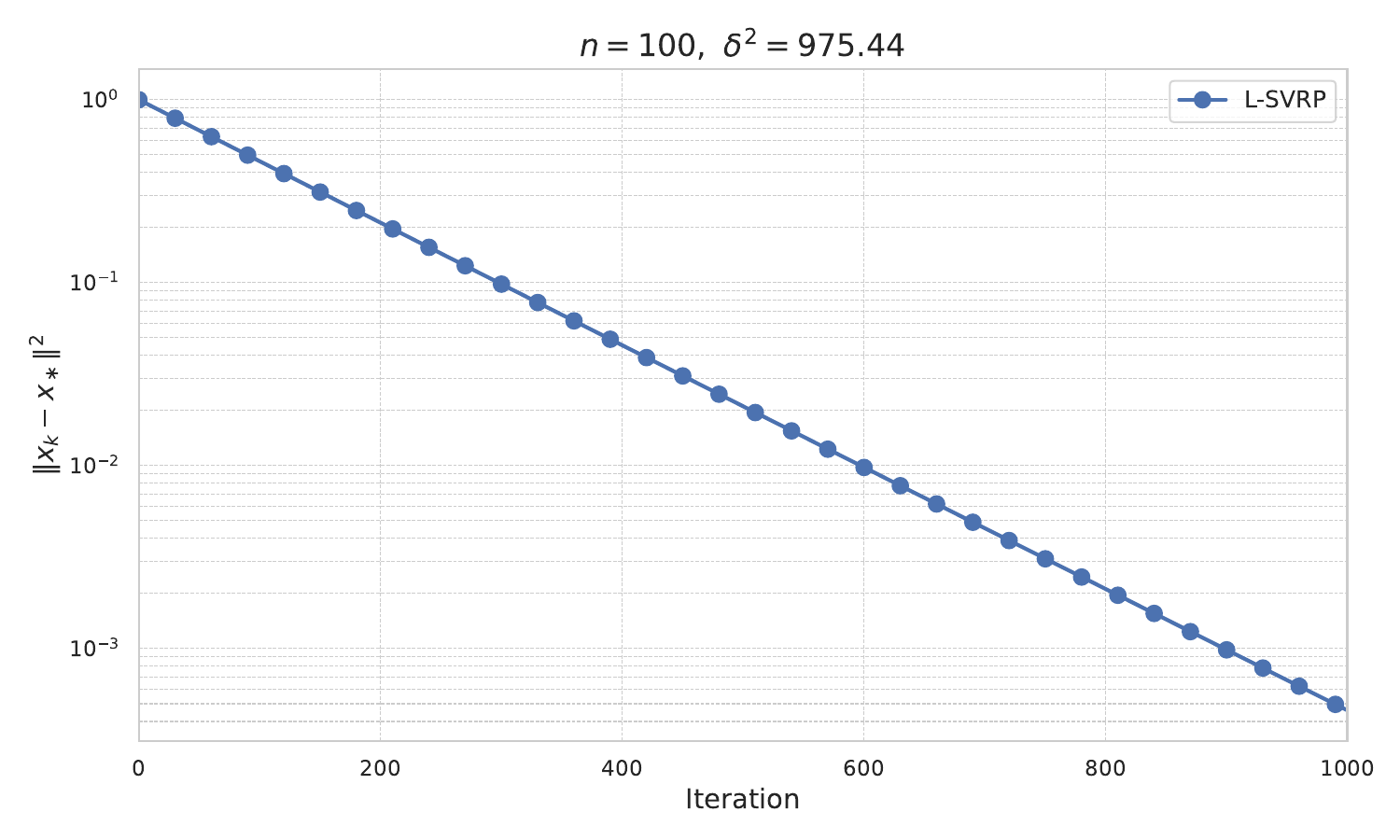}
	\caption{Convergence of \algname{L-SVRP} under one of the 48 configurations examined, where the number of functions $n = 100$ and the Hessian similarity parameter $\delta^2 = 975.44$. As predicted by theoretical analysis, the convergence is linear. We aggregate the slopes from such figures to produce~\Cref{fig:comparing_rates}.}
	\label{fig:example_one_run}
\end{figure}

\begin{figure}
	\centering
	\includegraphics[width=\linewidth]{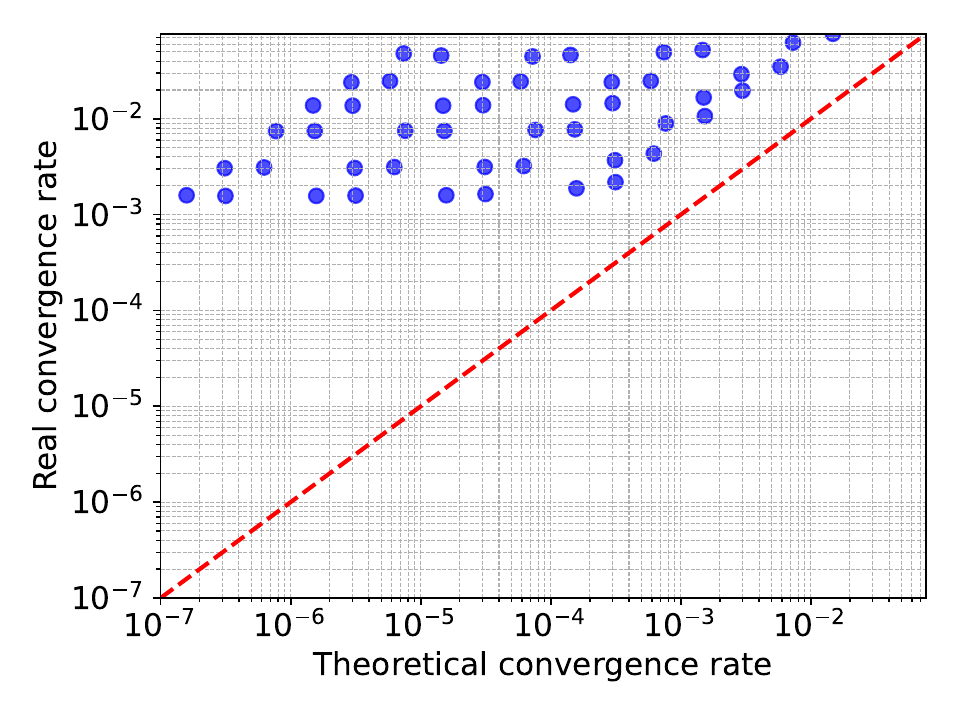}
	\caption{Comparison of real convergence rate vs. theoretical convergence rate. Each point represents one of the 48 problem instances evaluated across varying values of \( n \) and \( \delta^2 \). The red dashed line denotes the identity line \( y = x \). The fact that all points lie above this line confirms the validity of our theoretical analysis. However, the empirical convergence rates are often significantly better than the theoretical predictions, indicating that the theoretical bounds may be further tightened.}
	\label{fig:comparing_rates}
\end{figure}

For each configuration, we run the \algname{L-SVRP} algorithm for 1,000 iterations, repeating this process 200 times to average the squared distances $\| x_k - x_\ast \|^2$, thereby approximating the expectation $\ExpBr{ \| x_k - x_\ast \|^2 }$. The stepsize is defined as in Equation~\eqref{eq:stepsize_theory}. As detailed in the appendix and illustrated in \Cref{fig:example_one_run}, the dynamics of $\log \| x_k - x_\ast \|^2 $ exhibit linear behavior, which is consistent with our theoretical predictions. We consider $1 - x$, where $x$ is the slope of the aforementioned plot, as the empirical convergence rate and compare this value against the theoretical rate, reported in~\Cref{eq:contraction_rate_explicit}. Our results are presented in Figure~\ref{fig:comparing_rates}. As observed, the empirical convergence closely aligns with the theoretical predictions. However, since the real convergence rate is often significantly better than the theoretical estimate, this suggests that there may be potential for further improvements in the analysis.

\clearpage
\bibliography{bibliography}

\begin{thebibliography}{}

\bibitem[Arinez et~al., 2020]{manufacturing}
Arinez, J.~F., Chang, Q., Gao, R.~X., Xu, C., and Zhang, J. (2020).
\newblock Artificial intelligence in advanced manufacturing: Current status and
  future outlook.
\newblock {\em Journal of Manufacturing Science and Engineering},
  142(11):110804.

\bibitem[Asi and Duchi, 2019]{duchi}
Asi, H. and Duchi, J.~C. (2019).
\newblock Stochastic (approximate) proximal point methods: Convergence,
  optimality, and adaptivity.
\newblock {\em SIAM Journal on Optimization}, 29(3):2257–2290.

\bibitem[Bauschke and Combettes, 2011]{bauschke2011convex}
Bauschke, H. and Combettes, P. (2011).
\newblock {\em Convex Analysis and Monotone Operator Theory in Hilbert Spaces}.
\newblock CMS Books in Mathematics. Springer New York.

\bibitem[Bhat and Huang, 2021]{agriculture}
Bhat, S.~A. and Huang, N.-F. (2021).
\newblock Big data and ai revolution in precision agriculture: Survey and
  challenges.
\newblock {\em IEEE Access}, 9:110209--110222.

\bibitem[Condat et~al., 2023]{condat2023proximalsplittingalgorithmsconvex}
Condat, L., Kitahara, D., Contreras, A., and Hirabayashi, A. (2023).
\newblock Proximal splitting algorithms for convex optimization: A tour of
  recent advances, with new twists.

\bibitem[Condat and Richt{\'a}rik, 2023]{randprox}
Condat, L. and Richt{\'a}rik, P. (2023).
\newblock Randprox: Primal-dual optimization algorithms with randomized
  proximal updates.
\newblock In {\em The Eleventh International Conference on Learning
  Representations}.

\bibitem[Esteva et~al., 2019]{healthcare_dl}
Esteva, A., Robicquet, A., Ramsundar, B., Kuleshov, V., DePristo, M., Chou, K.,
  Cui, C., Corrado, G., Thrun, S., and Dean, J. (2019).
\newblock A guide to deep learning in healthcare.
\newblock {\em Nature Medicine}, 25(1):24--29.

\bibitem[Goodell et~al., 2021]{goodell2021artificial}
Goodell, J.~W., Kumar, S., Lim, W.~M., and Pattnaik, D. (2021).
\newblock Artificial intelligence and machine learning in finance: Identifying
  foundations, themes, and research clusters from bibliometric analysis.
\newblock {\em Journal of Behavioral and Experimental Finance}, 32:100577.

\bibitem[Goodfellow et~al., 2016]{goodfellow2016deep}
Goodfellow, I., Bengio, Y., and Courville, A. (2016).
\newblock {\em Deep Learning}.
\newblock Adaptive Computation and Machine Learning series. MIT Press.

\bibitem[Gorbunov et~al., 2021]{marina}
Gorbunov, E., Burlachenko, K.~P., Li, Z., and Richt\'arik, P. (2021).
\newblock Marina: Faster non-convex distributed learning with compression.
\newblock In Meila, M. and Zhang, T., editors, {\em Proceedings of the 38th
  International Conference on Machine Learning}, volume 139 of {\em Proceedings
  of Machine Learning Research}, pages 3788--3798. PMLR.

\bibitem[Gower et~al., 2019]{gower2019sgd}
Gower, R.~M., Loizou, N., Qian, X., Sailanbayev, A., Shulgin, E., and
  Richt{\'a}rik, P. (2019).
\newblock {SGD}: {G}eneral analysis and improved rates.
\newblock In {\em International Conference on Machine Learning}, pages
  5200--5209. PMLR.

\bibitem[Griffiths and Higham, 2010]{runge_kutta}
Griffiths, D. and Higham, D. (2010).
\newblock {\em Numerical Methods for Ordinary Differential Equations: Initial
  Value Problems}.
\newblock Springer Undergraduate Mathematics Series. Springer London.

\bibitem[Grudzie\'n et~al., 2023]{5gcs}
Grudzie\'n, M., Malinovsky, G., and Richt\'arik, P. (2023).
\newblock Can 5th generation local training methods support client sampling?
  yes!
\newblock In Ruiz, F., Dy, J., and van~de Meent, J.-W., editors, {\em
  Proceedings of The 26th International Conference on Artificial Intelligence
  and Statistics}, volume 206 of {\em Proceedings of Machine Learning
  Research}, pages 1055--1092. PMLR.

\bibitem[Harris et~al., 2020]{harris2020array}
Harris, C.~R., Millman, K.~J., van~der Walt, S.~J., Gommers, R., Virtanen, P.,
  Cournapeau, D., Wieser, E., Taylor, J., Berg, S., Smith, N.~J., Kern, R.,
  Picus, M., Hoyer, S., van Kerkwijk, M.~H., Brett, M., Haldane, A., del
  R{\'{i}}o, J.~F., Wiebe, M., Peterson, P., G{\'{e}}rard-Marchant, P.,
  Sheppard, K., Reddy, T., Weckesser, W., Abbasi, H., Gohlke, C., and Oliphant,
  T.~E. (2020).
\newblock Array programming with {NumPy}.
\newblock {\em Nature}, 585(7825):357--362.

\bibitem[Hofmann et~al., 2015]{l_svrg_original}
Hofmann, T., Lucchi, A., Lacoste-Julien, S., and McWilliams, B. (2015).
\newblock Variance reduced stochastic gradient descent with neighbors.
\newblock In Cortes, C., Lawrence, N., Lee, D., Sugiyama, M., and Garnett, R.,
  editors, {\em Advances in Neural Information Processing Systems}, volume~28.
  Curran Associates, Inc.

\bibitem[Hu and Huang, 2023]{proxskip_tighter}
Hu, Z. and Huang, H. (2023).
\newblock Tighter analysis for {P}rox{S}kip.
\newblock In Krause, A., Brunskill, E., Cho, K., Engelhardt, B., Sabato, S.,
  and Scarlett, J., editors, {\em Proceedings of the 40th International
  Conference on Machine Learning}, volume 202 of {\em Proceedings of Machine
  Learning Research}, pages 13469--13496. PMLR.

\bibitem[Jhunjhunwala et~al., 2023]{fedexp}
Jhunjhunwala, D., Wang, S., and Joshi, G. (2023).
\newblock Fedexp: Speeding up federated averaging via extrapolation.
\newblock In {\em The Eleventh International Conference on Learning
  Representations}.

\bibitem[Jiang et~al., 2024a]{stich_dane}
Jiang, X., Rodomanov, A., and Stich, S.~U. (2024a).
\newblock Federated optimization with doubly regularized drift correction.
\newblock In Salakhutdinov, R., Kolter, Z., Heller, K., Weller, A., Oliver, N.,
  Scarlett, J., and Berkenkamp, F., editors, {\em Proceedings of the 41st
  International Conference on Machine Learning}, volume 235 of {\em Proceedings
  of Machine Learning Research}, pages 21912--21945. PMLR.

\bibitem[Jiang et~al., 2024b]{stabilized_dane}
Jiang, X., Rodomanov, A., and Stich, S.~U. (2024b).
\newblock Stabilized proximal-point methods for federated optimization.

\bibitem[Johnson and Zhang, 2013]{svrg}
Johnson, R. and Zhang, T. (2013).
\newblock Accelerating stochastic gradient descent using predictive variance
  reduction.
\newblock In Burges, C., Bottou, L., Welling, M., Ghahramani, Z., and
  Weinberger, K., editors, {\em Advances in Neural Information Processing
  Systems}, volume~26. Curran Associates, Inc.

\bibitem[Karimireddy et~al., 2020]{scaffold}
Karimireddy, S.~P., Kale, S., Mohri, M., Reddi, S., Stich, S., and Suresh,
  A.~T. (2020).
\newblock {SCAFFOLD}: Stochastic controlled averaging for federated learning.
\newblock In III, H.~D. and Singh, A., editors, {\em Proceedings of the 37th
  International Conference on Machine Learning}, volume 119 of {\em Proceedings
  of Machine Learning Research}, pages 5132--5143. PMLR.

\bibitem[Khaled and Jin, 2023]{khaled2023faster}
Khaled, A. and Jin, C. (2023).
\newblock Faster federated optimization under second-order similarity.
\newblock In {\em The Eleventh International Conference on Learning
  Representations}.

\bibitem[Kone\v{c}n\'{y} et~al., 2016]{FEDLEARN}
Kone\v{c}n\'{y}, J., McMahan, H.~B., Yu, F., Richt\'{a}rik, P., Suresh, A.~T.,
  and Bacon, D. (2016).
\newblock Federated learning: strategies for improving communication
  efficiency.
\newblock In {\em NIPS Private Multi-Party Machine Learning Workshop}.

\bibitem[Konečný and Richtárik, 2015]{s2gd}
Konečný, J. and Richtárik, P. (2015).
\newblock Semi-stochastic gradient descent methods.

\bibitem[Kovalev et~al., 2022]{kovalev2022optimal}
Kovalev, D., Beznosikov, A., Borodich, E.~D., Gasnikov, A., and Scutari, G.
  (2022).
\newblock Optimal gradient sliding and its application to optimal distributed
  optimization under similarity.
\newblock In Oh, A.~H., Agarwal, A., Belgrave, D., and Cho, K., editors, {\em
  Advances in Neural Information Processing Systems}.

\bibitem[Kovalev et~al., 2020]{l_svrg}
Kovalev, D., Horv{\'a}th, S., and Richt{\'a}rik, P. (2020).
\newblock Don’t jump through hoops and remove those loops: Svrg and katyusha
  are better without the outer loop.
\newblock In Kontorovich, A. and Neu, G., editors, {\em Proceedings of the 31st
  International Conference on Algorithmic Learning Theory}, volume 117 of {\em
  Proceedings of Machine Learning Research}, pages 451--467. PMLR.

\bibitem[Li et~al., 2024]{fedexprox}
Li, H., Acharya, K., and Richtárik, P. (2024).
\newblock The power of extrapolation in federated learning.

\bibitem[Li et~al., 2020a]{fl_tianli}
Li, T., Sahu, A.~K., Talwalkar, A., and Smith, V. (2020a).
\newblock Federated learning: Challenges, methods, and future directions.
\newblock {\em IEEE Signal Processing Magazine}, 37(3):50--60.

\bibitem[Li et~al., 2020b]{fedprox}
Li, T., Sahu, A.~K., Zaheer, M., Sanjabi, M., Talwalkar, A., and Smith, V.
  (2020b).
\newblock Federated optimization in heterogeneous networks.

\bibitem[Li, 2022]{anita}
Li, Z. (2022).
\newblock Anita: An optimal loopless accelerated variance-reduced gradient
  method.

\bibitem[Li et~al., 2021]{page}
Li, Z., Bao, H., Zhang, X., and Richt\'arik, P. (2021).
\newblock Page: A simple and optimal probabilistic gradient estimator for
  nonconvex optimization.
\newblock In Meila, M. and Zhang, T., editors, {\em Proceedings of the 38th
  International Conference on Machine Learning}, volume 139 of {\em Proceedings
  of Machine Learning Research}, pages 6286--6295. PMLR.

\bibitem[Li et~al., 2020c]{adiana}
Li, Z., Kovalev, D., Qian, X., and Richt\'arik, P. (2020c).
\newblock Acceleration for compressed gradient descent in distributed and
  federated optimization.
\newblock In III, H.~D. and Singh, A., editors, {\em Proceedings of the 37th
  International Conference on Machine Learning}, volume 119 of {\em Proceedings
  of Machine Learning Research}, pages 5895--5904. PMLR.

\bibitem[Li and Richt\'arik, 2021]{canita}
Li, Z. and Richt\'arik, P. (2021).
\newblock Canita: Faster rates for distributed convex optimization with
  communication compression.
\newblock In Ranzato, M., Beygelzimer, A., Dauphin, Y., Liang, P., and Vaughan,
  J.~W., editors, {\em Advances in Neural Information Processing Systems},
  volume~34, pages 13770--13781. Curran Associates, Inc.

\bibitem[Lin et~al., 2023]{svrs}
Lin, D., Han, Y., Ye, H., and Zhang, Z. (2023).
\newblock Stochastic distributed optimization under average second-order
  similarity: Algorithms and analysis.
\newblock In Oh, A., Naumann, T., Globerson, A., Saenko, K., Hardt, M., and
  Levine, S., editors, {\em Advances in Neural Information Processing Systems},
  volume~36, pages 1849--1862. Curran Associates, Inc.

\bibitem[Maleki~Varnosfaderani and Forouzanfar, 2024]{maleki2024role}
Maleki~Varnosfaderani, S. and Forouzanfar, M. (2024).
\newblock The role of ai in hospitals and clinics: transforming healthcare in
  the 21st century.
\newblock {\em Bioengineering}, 11(4):337.

\bibitem[Malinovsky et~al., 2022]{vr_proxskip}
Malinovsky, G., Yi, K., and Richt{\'a}rik, P. (2022).
\newblock Variance reduced proxskip: Algorithm, theory and application to
  federated learning.
\newblock In Oh, A.~H., Agarwal, A., Belgrave, D., and Cho, K., editors, {\em
  Advances in Neural Information Processing Systems}.

\bibitem[McMahan et~al., 2017]{44822}
McMahan, H.~B., Moore, E., Ramage, D., Hampson, S., and y~Arcas, B.~A. (2017).
\newblock Communication-efficient learning of deep networks from decentralized
  data.
\newblock In {\em Proceedings of the 20th International Conference on
  Artificial Intelligence and Statistics (AISTATS)}.

\bibitem[Mishchenko et~al., 2024]{saber}
Mishchenko, K., Li, R., Fan, H., and Venieris, S. (2024).
\newblock Federated learning under second-order data heterogeneity.

\bibitem[Mishchenko et~al., 2022]{proxskip}
Mishchenko, K., Malinovsky, G., Stich, S., and Richt\'arik, P. (2022).
\newblock {P}rox{S}kip: Yes! {L}ocal gradient steps provably lead to
  communication acceleration! {F}inally!
\newblock In Chaudhuri, K., Jegelka, S., Song, L., Szepesvari, C., Niu, G., and
  Sabato, S., editors, {\em Proceedings of the 39th International Conference on
  Machine Learning}, volume 162 of {\em Proceedings of Machine Learning
  Research}, pages 15750--15769. PMLR.

\bibitem[Nesterov et~al., 2018]{nesterov2018lectures}
Nesterov, Y. et~al. (2018).
\newblock {\em Lectures on convex optimization}, volume 137.
\newblock Springer.

\bibitem[Nguyen et~al., 2017]{sarah}
Nguyen, L.~M., Liu, J., Scheinberg, K., and Tak{\'a}{\v{c}}, M. (2017).
\newblock {SARAH}: A novel method for machine learning problems using
  stochastic recursive gradient.
\newblock In Precup, D. and Teh, Y.~W., editors, {\em Proceedings of the 34th
  International Conference on Machine Learning}, volume~70 of {\em Proceedings
  of Machine Learning Research}, pages 2613--2621. PMLR.

\bibitem[Oosthuizen et~al., 2021]{oosthuizen2021artificial}
Oosthuizen, K., Botha, E., Robertson, J., and Montecchi, M. (2021).
\newblock Artificial intelligence in retail: The ai-enabled value chain.
\newblock {\em Australasian Marketing Journal}, 29(3):264--273.

\bibitem[Parikh and Boyd, 2013]{parikh2013proximal}
Parikh, N. and Boyd, S. (2013).
\newblock {\em Proximal Algorithms}.
\newblock Foundations and Trends{\textregistered} in Optimization Series. Now
  Publishers.

\bibitem[Richtárik et~al., 2024a]{unified_theory_sppa}
Richtárik, P., Sadiev, A., and Demidovich, Y. (2024a).
\newblock A unified theory of stochastic proximal point methods without
  smoothness.

\bibitem[Richtárik et~al., 2024b]{prox_unified}
Richtárik, P., Sadiev, A., and Demidovich, Y. (2024b).
\newblock A unified theory of stochastic proximal point methods without
  smoothness.

\bibitem[Robbins and Monro, 1951]{robbins1951stochastic}
Robbins, H. and Monro, S. (1951).
\newblock A stochastic approximation method.
\newblock {\em The annals of mathematical statistics}, pages 400--407.

\bibitem[Ryu and Boyd, 2016]{ryu2016stochastic}
Ryu, E. and Boyd, S. (2016).
\newblock Stochastic proximal iteration: A non-asymptotic improvement upon
  stochastic gradient descent.
\newblock Technical report, Stanford University.

\bibitem[Sadiev et~al., 2024]{sadiev2024stochasticproximalpointmethods}
Sadiev, A., Condat, L., and Richtárik, P. (2024).
\newblock Stochastic proximal point methods for monotone inclusions under
  expected similarity.

\bibitem[Sebbouh et~al., 2022]{rsgda}
Sebbouh, O., Cuturi, M., and Peyr\'e, G. (2022).
\newblock Randomized stochastic gradient descent ascent.
\newblock In Camps-Valls, G., Ruiz, F. J.~R., and Valera, I., editors, {\em
  Proceedings of The 25th International Conference on Artificial Intelligence
  and Statistics}, volume 151 of {\em Proceedings of Machine Learning
  Research}, pages 2941--2969. PMLR.

\bibitem[Shalev-Shwartz and Ben-David, 2014]{shalev2014understanding}
Shalev-Shwartz, S. and Ben-David, S. (2014).
\newblock {\em Understanding machine learning: From theory to algorithms}.
\newblock Cambridge university press.

\bibitem[Shamir et~al., 2014]{dane}
Shamir, O., Srebro, N., and Zhang, T. (2014).
\newblock Communication-efficient distributed optimization using an approximate
  newton-type method.
\newblock In Xing, E.~P. and Jebara, T., editors, {\em Proceedings of the 31st
  International Conference on Machine Learning}, volume~32 of {\em Proceedings
  of Machine Learning Research}, pages 1000--1008, Bejing, China. PMLR.

\bibitem[Sun et~al., 2020]{sonata}
Sun, Y., Daneshmand, A., and Scutari, G. (2020).
\newblock Distributed optimization based on gradient-tracking revisited:
  Enhancing convergence rate via surrogation.

\bibitem[Szlendak et~al., 2022]{szlendak2022permutation}
Szlendak, R., Tyurin, A., and Richt{\'a}rik, P. (2022).
\newblock Permutation compressors for provably faster distributed nonconvex
  optimization.
\newblock In {\em International Conference on Learning Representations}.

\bibitem[T.~Dinh et~al., 2020]{pfl_moreau}
T.~Dinh, C., Tran, N., and Nguyen, J. (2020).
\newblock Personalized federated learning with moreau envelopes.
\newblock In Larochelle, H., Ranzato, M., Hadsell, R., Balcan, M., and Lin, H.,
  editors, {\em Advances in Neural Information Processing Systems}, volume~33,
  pages 21394--21405. Curran Associates, Inc.

\bibitem[Traor{\'e} et~al., 2024]{vrt_sppa}
Traor{\'e}, C., Apidopoulos, V., Salzo, S., and Villa, S. (2024).
\newblock Variance reduction techniques for stochastic proximal point
  algorithms.
\newblock {\em Journal of Optimization Theory and Applications},
  203(2):1910--1939.

\bibitem[Virtanen et~al., 2020]{scipy}
Virtanen, P., Gommers, R., Oliphant, T.~E., Haberland, M., Reddy, T.,
  Cournapeau, D., Burovski, E., Peterson, P., Weckesser, W., Bright, J., {van
  der Walt}, S.~J., Brett, M., Wilson, J., Millman, K.~J., Mayorov, N., Nelson,
  A. R.~J., Jones, E., Kern, R., Larson, E., Carey, C.~J., Polat, {\.I}., Feng,
  Y., Moore, E.~W., {VanderPlas}, J., Laxalde, D., Perktold, J., Cimrman, R.,
  Henriksen, I., Quintero, E.~A., Harris, C.~R., Archibald, A.~M., Ribeiro,
  A.~H., Pedregosa, F., {van Mulbregt}, P., and {SciPy 1.0 Contributors}
  (2020).
\newblock {{SciPy} 1.0: Fundamental Algorithms for Scientific Computing in
  Python}.
\newblock {\em Nature Methods}, 17:261--272.

\bibitem[Wen et~al., 2022]{predictive_maintenance}
Wen, Y., {Fashiar Rahman}, M., Xu, H., and Tseng, T.-L.~B. (2022).
\newblock Recent advances and trends of predictive maintenance from data-driven
  machine prognostics perspective.
\newblock {\em Measurement}, 187:110276.

\bibitem[Woodworth et~al., 2023]{woodworth_soh}
Woodworth, B., Mishchenko, K., and Bach, F. (2023).
\newblock Two losses are better than one: Faster optimization using a cheaper
  proxy.
\newblock In Krause, A., Brunskill, E., Cho, K., Engelhardt, B., Sabato, S.,
  and Scarlett, J., editors, {\em Proceedings of the 40th International
  Conference on Machine Learning}, volume 202 of {\em Proceedings of Machine
  Learning Research}, pages 37273--37292. PMLR.

\bibitem[Woodworth and Srebro, 2016]{tight_bounds_smoothness}
Woodworth, B.~E. and Srebro, N. (2016).
\newblock Tight complexity bounds for optimizing composite objectives.
\newblock In Lee, D., Sugiyama, M., Luxburg, U., Guyon, I., and Garnett, R.,
  editors, {\em Advances in Neural Information Processing Systems}, volume~29.
  Curran Associates, Inc.

\bibitem[Yan et~al., 2020]{esgda}
Yan, Y., Xu, Y., Lin, Q., Liu, W., and Yang, T. (2020).
\newblock Optimal epoch stochastic gradient descent ascent methods for min-max
  optimization.
\newblock In Larochelle, H., Ranzato, M., Hadsell, R., Balcan, M., and Lin, H.,
  editors, {\em Advances in Neural Information Processing Systems}, volume~33,
  pages 5789--5800. Curran Associates, Inc.

\bibitem[Zhang et~al., 2020]{non_smooth_dl}
Zhang, J., He, T., Sra, S., and Jadbabaie, A. (2020).
\newblock Why gradient clipping accelerates training: A theoretical
  justification for adaptivity.
\newblock In {\em International Conference on Learning Representations}.

\end{thebibliography}
\clearpage

\section*{Checklist}
\begin{enumerate}

	\item For all models and algorithms presented, check if you include:
	\begin{enumerate}
		\item A clear description of the mathematical setting, assumptions, algorithm, and/or model. Yes
		\item An analysis of the properties and complexity (time, space, sample size) of any algorithm. Yes
		\item (Optional) Anonymized source code, with specification of all dependencies, including external libraries.
	\end{enumerate}

	\item For any theoretical claim, check if you include:
	\begin{enumerate}
		\item Statements of the full set of assumptions of all theoretical results. Yes
		\item Complete proofs of all theoretical results. Yes
		\item Clear explanations of any assumptions. Yes
	\end{enumerate}

	\item For all figures and tables that present empirical results, check if you include:
	\begin{enumerate}
		\item The code, data, and instructions needed to reproduce the main experimental results (either in the supplemental material or as a URL). Yes
		\item All the training details (e.g., data splits, hyperparameters, how they were chosen). Yes
		\item A clear definition of the specific measure or statistics and error bars (e.g., with respect to the random seed after running experiments multiple times). Yes
		\item A description of the computing infrastructure used. (e.g., type of GPUs, internal cluster, or cloud provider). Yes
	\end{enumerate}
	
	\item If you are using existing assets (e.g., code, data, models) or curating/releasing new assets, check if you include:
	\begin{enumerate}
		\item Citations of the creator If your work uses existing assets. Yes
		\item The license information of the assets, if applicable. Not Applicable
		\item New assets either in the supplemental material or as a URL, if applicable. Not Applicable
		\item Information about consent from data providers/curators. Not Applicable
		\item Discussion of sensible content if applicable, e.g., personally identifiable information or offensive content. Not Applicable
	\end{enumerate}
	
	\item If you used crowdsourcing or conducted research with human subjects, check if you include:
	\begin{enumerate}
		\item The full text of instructions given to participants and screenshots. Not Applicable
		\item Descriptions of potential participant risks, with links to Institutional Review Board (IRB) approvals if applicable. Not Applicable
		\item The estimated hourly wage paid to participants and the total amount spent on participant compensation. Not Applicable
	\end{enumerate}
	
\end{enumerate}

\clearpage
\appendix
\onecolumn
\section{Auxiliary facts}

For ease of reference, we present the following standard lemmas.

\begin{lemma}
Let $\{a_i\}_{i=1}^n$ be a set of arbitrary vectors in $\RR^d$. Let the average vector be denoted as $\bar{a} = \avein a_i$. Then, the following identity holds: 
\begin{equation}\label{eq:bias_variance_decomposition}
\avein \|a_i - \bar{a}\|^2 = \avein \|a_i\|^2 - \|\bar{a}\|^2.
\end{equation}
\end{lemma}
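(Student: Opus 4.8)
The plan is to prove this by directly expanding the squared norm on the left-hand side and exploiting the definition of the average vector. First I would write out the inner product expansion
\begin{equation*}
\norm{a_i - \bar{a}}^2 = \norm{a_i}^2 - 2\lin{a_i, \bar{a}} + \norm{\bar{a}}^2,
\end{equation*}
which is valid for any $a_i, \bar{a} \in \RR^d$ with respect to the Euclidean inner product.

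Next I would average this identity over $i = 1, \dots, n$. By linearity of the sum, the averaging passes through each of the three terms. The first term yields $\avein \norm{a_i}^2$ unchanged. The third term is constant in $i$, so $\avein \norm{\bar{a}}^2 = \norm{\bar{a}}^2$. The key simplification occurs in the cross term: using the definition $\bar{a} = \avein a_i$ and pulling the fixed vector $\bar{a}$ out of the average, I obtain
\begin{equation*}
\avein 2\lin{a_i, \bar{a}} = 2\lin{\avein a_i, \bar{a}} = 2\lin{\bar{a}, \bar{a}} = 2\norm{\bar{a}}^2.
\end{equation*}

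Combining these three contributions gives $\avein \norm{a_i}^2 - 2\norm{\bar{a}}^2 + \norm{\bar{a}}^2 = \avein \norm{a_i}^2 - \norm{\bar{a}}^2$, which is exactly the claimed identity~\eqref{eq:bias_variance_decomposition}. I do not anticipate any genuine obstacle here, as this is the elementary variance decomposition; the only point requiring a moment's attention is recognizing that the cross term collapses precisely because the vector being subtracted is itself the average, so that $\avein \lin{a_i, \bar{a}}$ reduces to $\lin{\bar{a}, \bar{a}}$ rather than remaining a generic sum. Everything else is a routine application of linearity and the fact that a constant survives averaging.
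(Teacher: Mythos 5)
Your proof is correct and follows exactly the same route as the paper's: expand the squared norm, average over $i$, and collapse the cross term via $\avein \lin{a_i, \bar{a}} = \lin{\bar{a}, \bar{a}} = \|\bar{a}\|^2$. No issues.
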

\begin{proof}
The proof is straight-forward.
\begin{eqnarray*}
\avein \|a_i - \bar{a}\|^2 & = & \avein \left[\|a_i\|^2 + \|\bar{a}\|^2 - 2\lin{a_i, \bar{a}}\right]\\
& = & \avein \|a_i\|^2 + \|\bar{a}\|^2 - 2\avein \lin{a_i, \bar{a}}\\
& = & \avein \|a_i\|^2 + \|\bar{a}\|^2 - 2 \lin{\avein a_i, \bar{a}}\\
& = & \avein \|a_i\|^2 + \|\bar{a}\|^2 - 2 \|\bar{a}\|^2\\
& = & \avein \|a_i\|^2 - \|\bar{a}\|^2.
\end{eqnarray*}
\end{proof}

\begin{lemma}\label{lm:scalar_prod_bound}
	For any vectors $a, b \in\RR^d$ and a positive scalar $\zeta > 0$, it holds that
	\begin{equation}\label{eq:scalar_prod_bound}
		\lin{a, b} - \frac{1}{2\zeta}\|a\|^2\leq \frac{\zeta}{2}\|b\|^2.
	\end{equation}
\end{lemma}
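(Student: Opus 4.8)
The plan is to deduce the claim by completing the square, i.e., from the non-negativity of a suitably scaled squared norm. After moving the term $\frac{1}{2\zeta}\norm{a}^2$ to the right-hand side, the asserted inequality is equivalent to the weighted Young inequality $\lin{a, b} \leq \frac{1}{2\zeta}\norm{a}^2 + \frac{\zeta}{2}\norm{b}^2$, so it suffices to establish this latter form and then rearrange.

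First I would consider the vector $\frac{1}{\sqrt{\zeta}}\, a - \sqrt{\zeta}\, b$, which is well defined since $\zeta > 0$, and whose squared norm is non-negative. Expanding via $\norm{u - v}^2 = \norm{u}^2 - 2\lin{u, v} + \norm{v}^2$ and using bilinearity of the inner product yields
\[
0 \leq \norm{\frac{1}{\sqrt{\zeta}}\, a - \sqrt{\zeta}\, b}^2 = \frac{1}{\zeta}\norm{a}^2 - 2\lin{a, b} + \zeta\norm{b}^2,
\]
where the cross term keeps the coefficient $-2$ because $\frac{1}{\sqrt{\zeta}}\cdot\sqrt{\zeta} = 1$. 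Isolating the inner product and dividing by $2$ gives exactly $\lin{a, b} \leq \frac{1}{2\zeta}\norm{a}^2 + \frac{\zeta}{2}\norm{b}^2$, and subtracting $\frac{1}{2\zeta}\norm{a}^2$ from both sides recovers the stated bound.

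There is no real obstacle here, as the statement is an elementary consequence of completing the square. The only genuine \emph{choice} is the pairing of the weights $\frac{1}{\sqrt{\zeta}}$ and $\sqrt{\zeta}$: their product must equal $1$ so that the cross term reproduces $\lin{a, b}$ with coefficient $-2$, while their squares must deliver the prescribed coefficients $\frac{1}{\zeta}$ and $\zeta$ on $\norm{a}^2$ and $\norm{b}^2$. An equally short alternative would be to combine the Cauchy–Schwarz inequality $\lin{a, b} \leq \norm{a}\,\norm{b}$ with the scalar Young inequality $st \leq \frac{s^2}{2\zeta} + \frac{\zeta t^2}{2}$ applied to $s = \norm{a}$ and $t = \norm{b}$, which avoids manipulating vectors altogether.
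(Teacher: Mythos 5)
Your proof is correct and follows essentially the same route as the paper, which likewise expands the non-negative squared norm of $\sqrt{\zeta}\, b - \frac{1}{\sqrt{\zeta}}\, a$ and rearranges. No further comment is needed.
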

\begin{proof} Expanding the quadratics, we get:
\begin{eqnarray*}
0 \leq \frac12\left\|\sqrt{\zeta} b - \frac{1}{\sqrt{\zeta}}a \right\|^2 = \frac12 \left[\zeta \|b\|^2 + \frac{1}{\zeta}\|a\|^2 - 2\lin{a, b} \right].
\end{eqnarray*}
It remains to rearrange the terms to finish the proof.
\end{proof}

We also present key properties of proximity operators.

\begin{definition}
	Let $f:\RR^d \rightarrow \RR \cup \{+\infty\}$ be a proper, closed, convex function and $\gamma > 0$ be a positive scalar. Then, the proximal operator with respect to $f$ is defined as follows:
	\begin{equation}\label{eq:prox_definition}
		\prox_{\gamma f}(x) = \argmin\limits_{y \in \RR^d} \left\{\frac{1}{2\gamma}\|x - y\|^2 + f(y) \right\}.
	\end{equation}
\end{definition}

\begin{lemma}[Proposition 16.44, ~\citep{bauschke2011convex}]\label{lm:prox_opt_condition}
	Let $f:\RR^d \rightarrow  \RR \cup \{+\infty\}$ be a proper, closed, convex function and $\gamma > 0$ be a positive scalar. Then,
	\begin{equation}\label{eq:prox_opt_condition}
		\prox_{\gamma f}(x) = y \Leftrightarrow x -y \in \gamma \partial f(y).
	\end{equation}
\end{lemma}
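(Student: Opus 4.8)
The plan is to reduce the claimed equivalence to the first-order optimality (Fermat) condition for the strongly convex minimization problem that defines the proximal operator, and then to evaluate the subdifferential via the sum rule. First I would fix $x \in \RR^d$ and introduce the objective $h(y) \eqdef \frac{1}{2\gamma}\|x - y\|^2 + f(y)$. Since $f$ is proper, closed, and convex while the quadratic term is continuous, finite-valued, and $\frac{1}{\gamma}$-strongly convex, the function $h$ is itself proper, closed, and strongly convex; hence it admits a unique minimizer, which by Definition~\eqref{eq:prox_definition} is precisely $\prox_{\gamma f}(x)$. Consequently, the assertion ``$\prox_{\gamma f}(x) = y$'' is equivalent to ``$y$ minimizes $h$ over $\RR^d$,'' and it suffices to characterize that minimizer.

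Next I would invoke Fermat's rule for convex functions: a point $y$ globally minimizes the proper, closed, convex function $h$ if and only if $0 \in \partial h(y)$. It then remains to compute $\partial h(y)$. Writing $h = q + f$ with $q(y) = \frac{1}{2\gamma}\|x - y\|^2$, the map $q$ is differentiable on all of $\RR^d$ with gradient $\nabla q(y) = \frac{1}{\gamma}(y - x)$. Applying the subdifferential sum rule gives $\partial h(y) = \nabla q(y) + \partial f(y) = \frac{1}{\gamma}(y - x) + \partial f(y)$, so the stationarity condition $0 \in \partial h(y)$ rearranges to $\frac{1}{\gamma}(x - y) \in \partial f(y)$, that is, $x - y \in \gamma\, \partial f(y)$, which is exactly the right-hand side of the stated equivalence. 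Both implications follow at once because every step above is an ``if and only if.''

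The only delicate point---the main obstacle, such as it is---is justifying additivity of the subdifferential, since in general one only has the inclusion $\partial(q + f) \supseteq \partial q + \partial f$ without a constraint qualification. Here equality is immediate: $q$ is finite and differentiable everywhere, so $\dom q = \RR^d$ trivially meets the relative interior of $\dom f$, and the Moreau--Rockafellar sum rule applies with equality. I would state this observation explicitly and otherwise lean on the standard convex-analytic facts invoked above (existence and uniqueness of the minimizer of a closed strongly convex function, and Fermat's rule), all of which are valid under the hypothesis that $f$ is proper, closed, and convex.
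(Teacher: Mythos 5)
Your argument is correct: reducing to Fermat's rule for the strongly convex objective $h(y)=\tfrac{1}{2\gamma}\|x-y\|^2+f(y)$ and applying the Moreau--Rockafellar sum rule (valid here since the quadratic term is finite and differentiable on all of $\RR^d$) is exactly the standard proof of this fact. The paper itself gives no proof, deferring to Proposition 16.44 of \citet{bauschke2011convex}, and your derivation is essentially the argument found there, with the constraint-qualification point for the sum rule properly addressed.
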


\begin{lemma}\label{lm:prox_contractivity}
	Let $f: \mathbb{R}^d \to \mathbb{R}$ be a $\mu$-strongly convex function  (\Cref{as:strong_cvx}). Then, for any $x, y \in \mathbb{R}^d$ and any $\gamma > 0$, the proximity operator satisfies
	\begin{equation}\label{eq:prox_contractivity}
		\left\| \prox_{\gamma f}(x) - \prox_{\gamma f}(y) \right\|^2 \leq \frac{1}{(1 + \gamma \mu)^2} \| x - y \|^2.
	\end{equation}
\end{lemma}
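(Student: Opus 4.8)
The plan is to denote $u \eqdef \prox_{\gamma f}(x)$ and $v \eqdef \prox_{\gamma f}(y)$ and to reduce the claim to a lower bound on $\|x-y\|^2$ in terms of $\|u-v\|^2$. First I would invoke the optimality characterization of the proximity operator from \Cref{lm:prox_opt_condition}: since $u = \prox_{\gamma f}(x)$ and $v = \prox_{\gamma f}(y)$, there exist subgradients $g_u \in \partial f(u)$ and $g_v \in \partial f(v)$ such that $x - u = \gamma g_u$ and $y - v = \gamma g_v$. Rearranging gives the exact identity $x - y = (u - v) + \gamma(g_u - g_v)$, which is the central algebraic relation I would expand.

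Next I would record two consequences of $\mu$-strong convexity (\Cref{as:strong_cvx}) applied at the points $u$ and $v$. Writing the strong convexity inequality at $u$ evaluated at $v$ and at $v$ evaluated at $u$, each with the respective subgradient, and adding the two inequalities yields the strong monotonicity bound $\langle g_u - g_v, u - v\rangle \geq \mu\|u-v\|^2$. Combining this with the Cauchy--Schwarz inequality $\|g_u - g_v\|\,\|u-v\| \geq \langle g_u - g_v, u-v\rangle$ further gives the gradient separation bound $\|g_u - g_v\| \geq \mu\|u-v\|$.

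With these in hand, I would expand the squared norm of the identity from the first step,
\[
\|x - y\|^2 = \|u-v\|^2 + 2\gamma\langle u-v, g_u - g_v\rangle + \gamma^2\|g_u - g_v\|^2,
\]
lower-bound the cross term by $2\gamma\mu\|u-v\|^2$ via strong monotonicity, and lower-bound the last term by $\gamma^2\mu^2\|u-v\|^2$ via the separation bound. This produces $\|x-y\|^2 \geq (1 + 2\gamma\mu + \gamma^2\mu^2)\|u-v\|^2 = (1+\gamma\mu)^2\|u-v\|^2$, and dividing by $(1+\gamma\mu)^2$ delivers the claim.

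The main subtlety -- and the step I expect to be the crux -- is that the naive expansion, which keeps only the strong monotonicity bound on the cross term and discards the nonnegative quadratic term $\gamma^2\|g_u - g_v\|^2$, yields merely the weaker factor $1 + 2\gamma\mu$ instead of the full $(1+\gamma\mu)^2$. Recovering the tight contraction factor requires retaining that quadratic term and bounding it from below through the separation estimate $\|g_u-g_v\|\geq\mu\|u-v\|$; everything else is routine algebra.
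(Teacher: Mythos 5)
Your proof is correct, and it starts from the same central identity as the paper's --- writing $x-y=(u-v)+\gamma(g_u-g_v)$ via the prox optimality condition and invoking strong monotonicity $\langle g_u-g_v,u-v\rangle\ge\mu\|u-v\|^2$ --- but it finishes differently. The paper takes the inner product of that identity with $u-v$, obtains $\langle x-y,u-v\rangle\ge(1+\gamma\mu)\|u-v\|^2$, and then applies Cauchy--Schwarz to $\langle x-y,u-v\rangle$ to extract the factor $1+\gamma\mu$ linearly before squaring. You instead expand $\|x-y\|^2$ fully and lower-bound each of the three terms, which forces you to also establish the separation bound $\|g_u-g_v\|\ge\mu\|u-v\|$ (itself a Cauchy--Schwarz consequence of strong monotonicity) so that the quadratic term contributes $\gamma^2\mu^2\|u-v\|^2$; your observation that discarding that term would only give the weaker factor $1+2\gamma\mu$ is exactly right and is the one genuine subtlety of your route. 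Both arguments need the same trivial case distinction when $u=v$ (the paper divides by $\|u-v\|$ at the end, you divide by it in the separation bound). A minor point in your favor: you phrase everything with subgradients $g_u\in\partial f(u)$, matching the subdifferential form of \Cref{lm:prox_opt_condition}, whereas the paper writes $\nabla f$ directly; since \Cref{as:strong_cvx} is stated with gradients this makes no practical difference here, but your version transfers verbatim to the nondifferentiable setting.
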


\begin{proof}
	Let us denote $ x' = \prox_{\gamma f}(x)$ and $y' = \prox_{\gamma f}(y)$. By the optimality condition of the proximal operator, we have
	\begin{equation*}
		x - x' = \gamma\nabla f( x' ), \quad  y - y' = \gamma\nabla f( y' ).
	\end{equation*}
	Subtracting these two equations yields
	\[
	x - y = x' - y' + \gamma \big( \nabla f( x' ) - \nabla f( y' ) \big).
	\]
	Taking the inner product with $x' - y'$, we obtain
	\[
	\langle x - y, x' - y' \rangle = \| x' - y' \|^2 + \gamma \langle \nabla f( x' ) - \nabla f( y' ), x' - y' \rangle.
	\]
	Using the strong convexity of $f$, we have
	\begin{equation}\label{eq:strong_convexity_scalar}
		\langle \nabla f( x' ) - \nabla f( y' ), x' - y' \rangle \geq \mu \| x' - y' \|^2.
	\end{equation}
	Substituting inequality~\eqref{eq:strong_convexity_scalar} into the previous expression, we get
	\[
	\langle x - y, x' - y' \rangle \geq \left( 1 + \gamma \mu \right) \| x' - y' \|^2.
	\]
	By the Cauchy–Schwarz inequality,
	\[
	\| x - y \| \| x' - y' \| \geq \langle x - y, x' - y' \rangle.
	\]
	Combining the two inequalities, we obtain
	\[
	\| x - y \| \| x' - y' \| \geq \left( 1 + \gamma \mu \right) \| x' - y' \|^2,
	\]
	which simplifies to
	\[
	\| x' - y' \| \leq \frac{1}{1 + \gamma \mu} \| x - y \|.
	\]
	This completes the proof.
\end{proof}

\section{Proof of~\Cref{lm:lsvrp_lyap_first}}

We begin by introducing the following technical lemma, which follows the proof strategy of the \algname{DANE} algorithm as described in~\citep{stich_dane}.

\begin{lemma}\label{lm:lsvrp_technical_lemma}
	Let~\Cref{as:strong_cvx} (strong convexity) hold. Let us define
	\begin{equation}\label{eq:bar_x_def_aux}
	\bar{x}_{k+1} \eqdef \ExpBr{x_{k+1} | x_k, w_k}
	\end{equation} and
	\begin{equation}\label{eq:psi_def_aux}
	\psi_i (x) \eqdef f_i(x) - f(x).
	\end{equation}
	Then, after one iteration of \Cref{alg:l_svrp}, for any $y \in \RR^d$, the following inequality holds:
	\begin{eqnarray*}
		\ExpBr{\|x_{k+1} - x_\ast\|^2 | x_k, w_k} \leq \frac{1}{1 + \mu\gamma} \|x_k - x_\ast\|^2 & \text{(contraction)} &\\
		- \frac{2}{ \mu + \frac{1}{\gamma}}[f(y) - f(x_\ast)] & (\text{always} \leq 0)&\\
		- \frac{2}{\mu + \frac{1}{\gamma}}  \ExpBr{\lin{\nabla \psi_{i_k}(y) - \nabla \psi_{i_k}(w_k), x_{k+1} - y} + \frac{\mu + \frac{1}{\gamma}}{2}\|x_{k+1} - y\|^2| x_k, w_k} &(\text{bounded by} \leq \frac{\delta^2 \|y- w_k\|^2}{\left(\mu + \frac{1}{\gamma} \right)^2})&\\
		- \frac{1}{1 + \mu\gamma}\|y - x_k\|^2 & (\text{negative term})& \\
		- \frac{2}{1 + \mu\gamma} \lin{\bar{x}_{k+1} - y, y + \gamma \nabla f(y) - x_k} &(\text{remaining term})&.
	\end{eqnarray*}
\end{lemma}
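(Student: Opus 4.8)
The plan is to exploit that the \algname{L-SVRP} update expresses $x_{k+1}$ as the exact minimizer of a strongly convex surrogate. By the definition of the proximity operator, the update in \Cref{alg:l_svrp} is equivalent to $x_{k+1} = \argmin_{z\in\RR^d}\Phi_k(z)$ with
\[
\Phi_k(z) \eqdef f_{i_k}(z) - \lin{\nabla\psi_{i_k}(w_k), z} + \frac{1}{2\gamma}\|z - x_k\|^2 ,
\]
which, by \Cref{as:strong_cvx}, is $(\mu + \tfrac1\gamma)$-strongly convex (the affine middle term does not affect curvature). Since $x_{k+1}$ minimizes $\Phi_k$, the strong-convexity lower bound at a minimizer yields $\tfrac{\mu+1/\gamma}{2}\|x_{k+1}-x_\ast\|^2 \le \Phi_k(x_\ast) - \Phi_k(x_{k+1})$. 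Routing the argument through this inequality, rather than through the plain prox nonexpansiveness of \Cref{lm:prox_contractivity}, is precisely what produces the exact contraction factor $\tfrac{1}{1+\mu\gamma}$.

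First I would multiply the displayed bound by $\tfrac{2}{\mu+1/\gamma}$ and expand $\Phi_k(x_\ast) - \Phi_k(x_{k+1})$. The quadratic part $\tfrac{1}{2\gamma}[\|x_\ast-x_k\|^2 - \|x_{k+1}-x_k\|^2]$ immediately delivers the contraction $\tfrac{1}{1+\mu\gamma}\|x_k-x_\ast\|^2$ and a spare negative term $-\tfrac{1}{1+\mu\gamma}\|x_{k+1}-x_k\|^2$, while the rest reduces to $\tfrac{2}{\mu+1/\gamma}[f_{i_k}(x_\ast)-f_{i_k}(x_{k+1})]$ plus a cross term $\tfrac{2}{\mu+1/\gamma}\lin{\nabla\psi_{i_k}(w_k), x_{k+1}-x_\ast}$.

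Next I would introduce the free point $y$. Applying \Cref{as:strong_cvx} to $f_{i_k}$ around $y$ replaces $-f_{i_k}(x_{k+1})$ by $-f_{i_k}(y) - \lin{\nabla f_{i_k}(y), x_{k+1}-y} - \tfrac{\mu}{2}\|x_{k+1}-y\|^2$, converting the function-value gap into $f_{i_k}(x_\ast)-f_{i_k}(y)$ and exposing the inner product with $\nabla f_{i_k}(y)$. The remaining work is pure bookkeeping: split $\|x_{k+1}-x_k\|^2 = \|x_{k+1}-y\|^2 + 2\lin{x_{k+1}-y, y-x_k} + \|y-x_k\|^2$; collapse the coefficient of $\|x_{k+1}-y\|^2$ to exactly $-1$ using $\tfrac{\mu}{\mu+1/\gamma}+\tfrac{1}{1+\mu\gamma}=1$; and, using $\tfrac{2\gamma}{1+\mu\gamma}=\tfrac{2}{\mu+1/\gamma}$ together with $\nabla\psi_{i_k}(y)=\nabla f_{i_k}(y)-\nabla f(y)$, recombine the $\nabla f(y)$ fragments with the $\lin{x_{k+1}-y, y-x_k}$ term into the stated $-\tfrac{2}{1+\mu\gamma}\lin{x_{k+1}-y,\, y+\gamma\nabla f(y)-x_k}$. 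Writing $x_{k+1}-x_\ast=(x_{k+1}-y)+(y-x_\ast)$ in the cross term turns it into the desired $\tfrac{2}{\mu+1/\gamma}\lin{\nabla\psi_{i_k}(w_k), x_{k+1}-y}$ plus a single residual $\tfrac{2}{\mu+1/\gamma}\lin{\nabla\psi_{i_k}(w_k), y-x_\ast}$.

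The proof then closes by taking $\ExpBr{\cdot \mid x_k, w_k}$. The residual vanishes because $\ExpBr{\nabla\psi_{i_k}(w_k)\mid x_k,w_k} = \avein\nabla\psi_i(w_k)=0$; the identity $\ExpBr{f_{i_k}(y)\mid x_k,w_k}=f(y)$ promotes $f_{i_k}$ to $f$ in the function-value gap; and, since $y+\gamma\nabla f(y)-x_k$ is deterministic, linearity replaces $x_{k+1}$ by $\bar{x}_{k+1}$ in the last inner product, giving exactly the claimed bound. I expect the only real obstacle to be organizational: carefully tracking the several inner-product and quadratic fragments through the substitution of $y$ and confirming that the curvature constants collapse via $\tfrac{\mu}{\mu+1/\gamma}+\tfrac{1}{1+\mu\gamma}=1$, so that no stray term survives beyond the zero-mean residual.
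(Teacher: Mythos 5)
Your proposal is correct and follows essentially the same route as the paper's proof: both rewrite the update as $x_{k+1}=\argmin_z \Phi_k(z)$ for the $(\mu+\tfrac1\gamma)$-strongly convex surrogate $\Phi_k$, apply the strong-convexity inequality at the minimizer with the comparison point $x_\ast$, lower-bound $f_{i_k}(x_{k+1})$ via strong convexity around the free point $y$, and finish by taking conditional expectation so that the $\ExpBr{\nabla\psi_{i_k}(w_k)\mid x_k,w_k}=0$ residual vanishes and the $\nabla f(y)$ fragments assemble into the remaining inner-product term. The only differences (setting $x=x_\ast$ at the outset rather than at the end, and the order in which expectation is taken) are cosmetic.
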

\begin{remark}
To eliminate the "remaining" term, represented by the scalar product, it is necessary to choose $y \in \RR^d$ such that $ \langle \bar{x}_{k+1} - y, y + \gamma \nabla f(y) - x_k \rangle \geq 0$. This condition is satisfied, for example, when $y = \prox_{\gamma f}(x_k)$ or $y = \bar{x}_{k+1}$.
\end{remark}
\begin{proof}
	In this proof, all expectations are taken with respect to $x_k$ and $w_k$. Let us denote $h_k  = \nabla f_{i_k}(w_k) - \nabla f(w_k)$. We begin as follows:
	\begin{eqnarray}
		x_{k+1} &\overset{\text{Alg.}~\ref{alg:l_svrp} }{=}& \prox_{\gamma f_{i_k}}(x_k + \gamma h_k)\notag\\
		& \overset{\eqref{eq:prox_definition}}{=} & \argmin\limits_{x\in\RR^d}\left\{f_{\xi_k}(x) + \frac{1}{2\gamma}\|x - x_k - \gamma h_k\|^2 \right\}\notag\\
		& = & \argmin\limits_{x\in\RR^d}\left\{f_{\xi_k}(x) + \frac{1}{2\gamma}\left[\|x - x_k\|^2  + \gamma^2\|h_k\|^2 - 2\gamma\lin{x - x_k, h_k} \right] \right\} \label{eq:lsvrp_x_k_1_simplification}\\
		& = & \argmin\limits_{x\in\RR^d}\left\{f_{\xi_k}(x) + \frac{1}{2\gamma}\|x - x_k\|^2 - \lin{x, h_k}  + \frac{\gamma}{2}\|h_k\|^2 + \lin{x_k, h_k}  \right\} \notag\\
		& = & \argmin\limits_{x \in \RR^d} \left\{f_{\xi_k}(x) - \lin{x, h_k} + \frac{1}{2\gamma}\|x - x_k\|^2 \right\}\notag.
	\end{eqnarray}
	For brevity, we denote $f_{i_k}^k (x)\eqdef f_{i_k}(x) - \lin{x, \nabla f_{i_k}(w_k) - \nabla f(w_k)} + \frac{1}{2\gamma}\|x - x_k\|^2$. Then, according to~\eqref{eq:lsvrp_x_k_1_simplification}, 
	\begin{equation}\label{eq:lsvrp_x_k_1_simplification_2}
		x_{k+1} = \argmin f_{i_k}^k(x).
	\end{equation}
	
	By leveraging the $\mu + \frac{1}{\gamma}$-strong convexity of $f_{i_k}^k$ and applying~\eqref{eq:lsvrp_x_k_1_simplification_2}, we derive the following for any $x \in \RR^d$:
	\begin{eqnarray*}
		f_{i_k}^k(x) \geq f_{i_k}^k(x_{k+1}) + \frac{\mu + \frac{1}{\gamma}}{2} \|x - x_{k+1}\|^2.
	\end{eqnarray*}
	Expanding $f_{i_k}^k$, we obtain
	\begin{eqnarray*}
		&&f_{i_k}(x) - \lin{x, \nabla f_{i_k}(w_k) - \nabla f(w_k)} + \frac{1}{2\gamma}\|x - x_k\|^2 \\
		&&\geq  f_{i_k}(x_{k+1}) - \lin{x_{k+1}, \nabla f_{i_k}(w_k) - \nabla f(w_k)} + \frac{1}{2\gamma}\|x_{k+1} - x_k\|^2 +  \frac{\mu + \frac{1}{\gamma}}{2} \|x - x_{k+1}\|^2.
	\end{eqnarray*}
	Exploiting the strong convexity of $f_{i_k}$, we further obtain the following for any arbitrary $y\in\RR^d$:
	\begin{eqnarray*}
		&&f_{i_k}(x) - \lin{x, \nabla f_{i_k}(w_k) - \nabla f(w_k)} + \frac{1}{2\gamma}\|x - x_k\|^2 \\
		&&\geq  f_{i_k}(y) + \lin{\nabla f_{i_k}(y), x_{k+1} - y} + \frac{\mu}{2}\|y - x_{k+1}\|^2\\
		&& - \lin{x_{k+1}, \nabla f_{i_k}(w_k) - \nabla f(w_k)} + \frac{1}{2\gamma}\|x_{k+1} - x_k\|^2 +  \frac{\mu + \frac{1}{\gamma}}{2} \|x - x_{k+1}\|^2.
	\end{eqnarray*}
	
	By taking the conditional expectation $\ExpBr{\cdot | x_k, w_k}$, we get
	\begin{eqnarray}
		&&f(x) + \frac{1}{2\gamma} \|x - x_k\|^2 \geq f(y) + \frac{\mu + \frac{1}{\gamma}}{2} \ExpBr{\|x - x_{k+1}\|^2} \label{eq:lsvrp_aux1}\\
		&&+ \ExpBr{\lin{\nabla f_{i_k}(y), x_{k+1} - y} - \lin{x_{k+1}, \nabla f_{i_k}(w_k) - \nabla f(w_k)}} + \ExpBr{\frac{\mu}{2}\|x_{k+1} - y\|^2 + \frac{1}{2\gamma}\|x_{k+1} - x_k\|^2}.\notag
	\end{eqnarray}
	Recall that $\psi_{i_k} \eqdef f_{i_k} - f$. Given that $\ExpBr{\lin{y, \nabla f_{i_k}(w_k) - \nabla f(w_k)}| x_k, w_k} = 0$, we additionally observe
	\begin{eqnarray*}
		&&\ExpBr{\lin{\nabla f_{i_k}(y), x_{k+1} - y} - \lin{x_{k+1}, \nabla f_{i_k}(w_k) - \nabla f(w_k)}} \\
		&& = \ExpBr{\lin{\nabla f_{i_k}(y) - \nabla f(y) - [\nabla f_{i_k}(w_k) - \nabla f(w_k)], x_{k+1} - y}} + \ExpBr{\lin{\nabla f(y), x_{k+1} - y}} \\
		&&= \ExpBr{\lin{\nabla f_{i_k}(y) - \nabla f(y) - [\nabla f_{i_k}(w_k) - \nabla f(w_k)], x_{k+1} - y}} + \lin{\nabla f(y), \bar{x}_{k+1} - y} \\
		&& = \ExpBr{\lin{\nabla \psi_{i_k}(y) - \nabla \psi_{i_k}(w_k), x_{k+1} - y}} + \lin{\nabla f(y), \bar{x}_{k+1} - y}.
	\end{eqnarray*}
	It is also evident that
	\begin{eqnarray*}
		&&\ExpBr{\frac{\mu}{2}\|x_{k+1} - y\|^2 + \frac{1}{2\gamma}\|x_{k+1} - x_k\|^2} \\
		&& = \ExpBr{\frac{\mu}{2}\|x_{k+1} - y\|^2 + \frac{1}{2\gamma}\|x_{k+1} - y\|^2 + \frac{1}{2\gamma}\|y - x_k\|^2 + \frac{1}{\gamma}\lin{x_{k+1} - y, y - x_k}} \\
		&& = \ExpBr{\frac{\mu + \frac{1}{\gamma}}{2}\|x_{k+1} - y\|^2 } + \frac{1}{2\gamma} \|y - x_k\|^2 + \frac{1}{\gamma}\lin{\bar{x}_{k+1} - y, y - x_k}.
	\end{eqnarray*}
	Plugging last two equations into~\eqref{eq:lsvrp_aux1}, we obtain
	\begin{eqnarray*}
		&&f(x) + \frac{1}{2\gamma} \|x - x_k\|^2 \geq f(y) + \frac{\mu + \frac{1}{\gamma}}{2} \ExpBr{\|x - x_{k+1}\|^2}\\
		&&+\ExpBr{\lin{\nabla \psi_{i_k}(y) - \nabla \psi_{i_k}(w_k), x_{k+1} - y}} + \lin{\nabla f(y), \bar{x}_{k+1} - y} \\
		&& + \ExpBr{\frac{\mu + \frac{1}{\gamma}}{2}\|x_{k+1} - y\|^2 } + \frac{1}{2\gamma} \|y - x_k\|^2 + \frac{1}{\gamma}\lin{\bar{x}_{k+1} - y, y - x_k}.
	\end{eqnarray*}
	Rearranging terms, we get
	\begin{eqnarray*}
		&&f(x) + \frac{1}{2\gamma} \|x - x_k\|^2 \geq f(y) + \frac{\mu + \frac{1}{\gamma}}{2} \ExpBr{\|x - x_{k+1}\|^2} + \frac{1}{\gamma} \lin{\bar{x}_{k+1} - y, y + \gamma \nabla f(y) - x_k} \\
		&& + \ExpBr{\lin{\nabla \psi_{i_k}(y) - \nabla \psi_{i_k}(w_k), x_{k+1} - y}} + \frac{\mu + \frac{1}{\gamma}}{2}\ExpBr{\|x_{k+1} - y\|^2 } + \frac{1}{2\gamma} \|y - x_k\|^2.
	\end{eqnarray*}
	Rearranging the terms once again and setting $x = x_\ast$, we have:
	\begin{eqnarray*}
		\ExpBr{\|x_{k+1} - x_\ast\|^2} \leq \frac{1}{1 + \mu\gamma} \|x_k - x_\ast\|^2 & \text{(contraction)} &\\
		- \frac{2}{ \mu + \frac{1}{\gamma}}[f(y) - f(x_\ast)] & (\text{always} \leq 0)&\\
		- \frac{2}{\mu + \frac{1}{\gamma}}  \ExpBr{\lin{\nabla \psi_{i_k}(y) - \nabla \psi_{i_k}(w_k), x_{k+1} - y} + \frac{\mu + \frac{1}{\gamma}}{2}\|x_{k+1} - y\|^2} &(\text{can be bounded by} \leq \frac{\delta^2 \|y- w_k\|^2}{\left(\mu + \frac{1}{\gamma} \right)^2})&\\
		- \frac{1}{1 + \mu\gamma}\|y - x_k\|^2 - \frac{2}{1 + \mu\gamma} \lin{\bar{x}_{k+1} - y, y + \gamma \nabla f(y) - x_k} &(\text{remaining terms})&.
	\end{eqnarray*}
\end{proof}

The proof of~\Cref{lm:lsvrp_lyap_first} now follows as a corollary of~\Cref{lm:lsvrp_technical_lemma}.
\paragraph{Proof of~\Cref{lm:lsvrp_lyap_first}.}
	If we substitute $y = \bar{x}_{k+1}$ into~\Cref{lm:lsvrp_technical_lemma}, the last term, $\lin{\bar{x}_{k+1} - y, y + \gamma \nabla f(y) - x_k}|_{y = \bar{x}_{k+1}} = 0$. Furthermore, by noting that $f(\bar{x}_{k+1}) - f(x_\ast) \geq 0$, the statement can be established with the application of~\Cref{lm:lsvrp_technical_lemma}.
	
\section{Proof of~\Cref{thm:lsvrp_tech_conv}}

\begin{proof}
	Recall that the Lyapunov sequence is defined as follows:
	\begin{equation}\label{eq:lsvrp_lyapunov_func_def}
	\Lambda_k = \|x_k - x_\ast\|^2 + c\|w_k - x_k\|^2.
	\end{equation}
	Combining~\Cref{lm:lsvrp_lyap_first} and~\Cref{lm:lsvrp_lyap_second}, we obtain:
	\allowdisplaybreaks{
		\begin{eqnarray}
			\ExpBr{\Lambda_{k+1} | x_k, w_k} &\overset{\eqref{eq:lsvrp_lyapunov_func_def}}{=}& \ExpBr{\|x_{k+1} - x_\ast\|^2 | x_k, w_k} + c \ExpBr{\|w_{k+1} - x_{k+1}\|^2 | x_k, w_k}  \notag\\
			&\overset{\eqref{eq:lsvrp_lyap_first} + \eqref{eq:lsvrp_lyap_second}}{\leq}& \frac{1}{1 + \mu\gamma} \|x_k - x_\ast\|^2 \notag\\
			& &- \frac{2}{\mu + \frac{1}{\gamma}}  \ExpBr{\lin{\nabla \psi_{i_k}(\bar{x}_{k+1}) - \nabla \psi_{i_k}(w_k), x_{k+1} - \bar{x}_{k+1}} + \frac{\mu + \frac{1}{\gamma}}{2}\|x_{k+1} - \bar{x}_{k+1}\|^2}\notag \\
			& &- \frac{1}{1 + \mu\gamma}\|\bar{x}_{k+1} - x_k\|^2\notag\\
			& & + (1 - p(1 - \xi))(1 + \zeta) c\|w_k - x_k\|^2 \notag\\
			& & + (1 - p(1 - \xi))(1 + \zeta^{-1}) c\|x_k - \bar{x}_{k+1}\|^2 \notag\\
			& &  - cp\xi \|w_k - \bar{x}_{k+1}\|^2 \notag\\
			& & + c(1 - p) \ExpBr{\|\bar{x}_{k+1} - x_{k+1}\|^2 | x_k, w_k}\notag\\
			& = &  \frac{1}{1 + \mu\gamma} \|x_k - x_\ast\|^2 + (1 - p(1 - \xi))(1 + \zeta) c\|w_k - x_k\|^2\label{eq:lsvrp_aux3}\\
			& & - \frac{2}{\mu + \frac{1}{\gamma}}  \ExpBr{\lin{\nabla \psi_{i_k}(\bar{x}_{k+1}) - \nabla \psi_{i_k}(w_k), x_{k+1} - \bar{x}_{k+1}} + \frac{\mu + \frac{1}{\gamma}}{2}\left(1 - c(1 - p)\right)\|x_{k+1} - \bar{x}_{k+1}\|^2}\notag\\
			& & - cp\xi \|w_k - \bar{x}_{k+1}\|^2\notag\\
			& & + \|\bar{x}_{k+1} - x_k\|^2 \left[(1 - p(1 - \xi))(1 + \zeta^{-1}) c - \frac{1}{1 + \mu\gamma}\right]\notag\\
			& \overset{\eqref{eq:scalar_prod_bound}}{\leq} & \max\left\{\frac{1}{1 + \mu\gamma}, 1 - p(1 - \xi)(1 + \zeta)\right\} \Lambda_k \notag\\
			& & + \frac{2}{\mu + \frac{1}{\gamma}} \frac{1}{2\left( \mu + \frac{1}{\gamma}\right)(1 - c(1 - p))} \ExpBr{\|\nabla \psi_{i_k}(\bar{x}_{k+1}) - \nabla \psi_{i_k}(w_k)\|^2} \notag\\
			& & -  cp\xi \|w_k - \bar{x}_{k+1}\|^2, \notag
		\end{eqnarray}
	}
	where we drop the last term since we enforce $c \leq \frac{1}{(1 + \mu\gamma)(1 - p(1 - \xi))(1 + \zeta^{-1})}$. Additionally, observe that
	$$
	c \leq \frac{1}{(1 + \mu\gamma)(1 - p(1 - \xi))(1 + \zeta^{-1})} < \frac{1}{1 - p(1 - \xi)} < \frac{1}{1 - p}.
	$$
	Therefore, $(1 - c(1 - p))\frac{\mu + \frac{1}{\gamma}}{2} > 0$, and~\Cref{lm:scalar_prod_bound} is applicable.
	Then, we continue
	\begin{eqnarray*}
		\ExpBr{\Lambda_{k+1} | x_k, w_k} &\overset{\eqref{eq:lsvrp_aux3}}{\leq} & \max\left\{\frac{1}{1 + \mu\gamma}, (1 - p(1 - \xi))(1 + \zeta)\right\} \Lambda_k \\
		& & + \frac{2}{\mu + \frac{1}{\gamma}} \frac{1}{2\left( \mu + \frac{1}{\gamma}\right)(1 - c(1 - p))} \ExpBr{\|\nabla \psi_{i_k}(\bar{x}_{k+1}) - \nabla \psi_{i_k}(w_k)\|^2} \\
		& & -  cp\xi \|w_k - \bar{x}_{k+1}\|^2 \\
		& \overset{\eqref{eq:hess_sim_finite_sum}}{\leq} & \max\left\{\frac{1}{1 + \mu\gamma}, (1 - p(1 - \xi))(1 + \zeta)\right\} \Lambda_k \\
		& & + \frac{1}{\left( \mu + \frac{1}{\gamma}\right)^2(1 - c(1 - p))} \delta^2 \|w_k - \bar{x}_{k+1}\|^2 \\
		& & -  cp\xi \|w_k - \bar{x}_{k+1}\|^2\\
		& \leq &  \max\left\{\frac{1}{1 + \mu\gamma}, (1 - p(1 - \xi))(1 + \zeta)\right\} \Lambda_k,
	\end{eqnarray*}
	since we enforce $ \frac{\delta^2}{\left( \mu + \frac{1}{\gamma}\right)^2(1 - c(1 - p))} \leq cp\xi$. To complete the proof, it remains to take the full expectation of both sides.
\end{proof}

\section{Proof of~\Cref{crl:lsvrp_conv2}}

\begin{proof}
	Let us define 
	\begin{equation}\label{eq:lsvrp_aux4}
		a \eqdef \frac{2(3 - p)^2}{p^2(p+1)}.
	\end{equation}
	According to Corollary~\ref{crl:lsvrp_conv}, the stepsize $\gamma$ must satisfy the inequality
	$$
	a\delta^2 \gamma^2 - \mu\gamma - 1 \leq 0.
	$$
	
	We verify that the choice $\gamma'= \frac{1}{\delta\sqrt{a}}$ satisfies this condition: $a\delta^2\gamma'^2 - \mu\gamma' - 1 = -\mu \gamma' \leq 0$.
	By~\eqref{eq:contraction_rate_explicit}, we have
	\begin{eqnarray*}
		\ExpBr{\|x_{T} - x_\ast\|^2} \leq \max\left\{1 - \frac{\mu\gamma'}{1 + \mu\gamma'}, 1 - \frac{p}{4}\right\}^T \|x_0 - x_\ast\|^2.
	\end{eqnarray*}
	To achieve $\ExpBr{\|x_{T} - x_\ast\|^2} \leq \varepsilon$, we need to satisfy
	\begin{eqnarray*}
		T &\geq& \left(1 + \frac{1}{\mu\gamma'} + \frac{4}{p} \right) \log\left(\frac{\|x_0 - x_\ast\|^2}{\varepsilon} \right) \\
		&=& \left(1 + \frac{\delta\sqrt{a}}{\mu} + \frac{4}{p} \right) \log\left(\frac{\|x_0 - x_\ast\|^2}{\varepsilon} \right) \\
		& \overset{\eqref{eq:lsvrp_aux4}}{=} & \left(1 + \frac{\delta}{\mu}\frac{3-p}{p} \sqrt{\frac{2}{p+1}} + \frac{4}{p} \right) \log\left(\frac{\|x_0 - x_\ast\|^2}{\varepsilon} \right) \Leftarrow\\
		T &\geq& \left(1 + \frac{\delta}{\mu}\frac{3}{p}  + \frac{4}{p} \right) \log\left(\frac{\|x_0 - x_\ast\|^2}{\varepsilon} \right),
	\end{eqnarray*}
	what completes the proof.
\end{proof}

\newpage

\section{Convex analysis}

The result established for strongly convex functions can be seamlessly extended to the convex case. In summary, all findings presented in the earlier sections remain valid when $\mu = 0$ and for any $ x_\ast \in \Argmin f $ (noting that if $\mu > 0$, the minimizer is unique). However, a key challenge arises: the primary result for strongly convex functions does not yield contraction when $\mu = 0$. Nevertheless, by employing a few modifications to the existing proof, it is possible to derive a new convergence rate.

\begin{lemma}\label{lm:lsvrp_lyap_first_amended}
	Let~\Cref{as:strong_cvx} (strong convexity) hold. Let us define 
	$
	\psi_i(x) \eqdef f_i(x)-f(x),
	$
	and $x_\ast \in \Argmin f$, and $f^\ast = \min\limits_{x \in \RR^d} f(x)$
	Then, for one iteration of Algorithm~\ref{alg:l_svrp}, the following inequality holds:
	\begin{equation}\label{eq:lsvrp_lyap_first_amended}
		\begin{aligned}
			& \ExpBr{\|x_{k+1} - x_\ast\|^2 | x_k, w_k} \leq  \frac{1}{1 + \mu\gamma} \|x_k - x_\ast\|^2 - \frac{2}{\mu + \frac{1}{\gamma}} (f(\bar{x}_{k+1}) - f^\ast) - \ExpBr{\|x_{k+1} - \bar{x}_{k+1}\|^2} \\
			& \quad - \frac{1}{1 + \mu\gamma}\|\bar{x}_{k+1} - x_k\|^2 - \frac{2}{\mu + \frac{1}{\gamma}}  \ExpBr{\lin{\nabla \psi_{i_k}(\bar{x}_{k+1}) - \nabla \psi_{i_k}(w_k), x_{k+1} - \bar{x}_{k+1}} | x_k, w_k}.
		\end{aligned}
	\end{equation}
\end{lemma}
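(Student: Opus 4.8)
The plan is to obtain the amended lemma as an immediate specialization of the already-established technical result, Lemma~\ref{lm:lsvrp_technical_lemma}, which holds for an arbitrary point $y \in \RR^d$. The only new idea relative to the proof of Lemma~\ref{lm:lsvrp_lyap_first} is a matter of bookkeeping: we must retain, rather than discard, the function-value gap term.

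First, I would invoke Lemma~\ref{lm:lsvrp_technical_lemma} with the specific choice $y = \bar{x}_{k+1} = \ExpBr{x_{k+1} \mid x_k, w_k}$, which is one of the two substitutions flagged in the remark following that lemma as annihilating the ``remaining'' term. Indeed, with this choice the last summand $-\frac{2}{1+\mu\gamma}\lin{\bar{x}_{k+1} - y, \, y + \gamma\nabla f(y) - x_k}$ vanishes identically, since its first factor is $\bar{x}_{k+1} - \bar{x}_{k+1} = 0$.

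Next, I would expand the combined quadratic-plus-inner-product term. Substituting $y = \bar{x}_{k+1}$ into $-\frac{2}{\mu + \frac{1}{\gamma}}\ExpBr{\lin{\nabla\psi_{i_k}(y) - \nabla\psi_{i_k}(w_k), x_{k+1} - y} + \frac{\mu + \frac{1}{\gamma}}{2}\|x_{k+1} - y\|^2}$ and distributing the prefactor, the quadratic part collapses to $-\ExpBr{\|x_{k+1} - \bar{x}_{k+1}\|^2}$ (the constants $\frac{2}{\mu+\frac{1}{\gamma}}$ and $\frac{\mu+\frac{1}{\gamma}}{2}$ cancel), while the inner-product part is exactly the last term appearing in~\eqref{eq:lsvrp_lyap_first_amended}. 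Together with the contraction term $\frac{1}{1+\mu\gamma}\|x_k - x_\ast\|^2$ and the negative term $-\frac{1}{1+\mu\gamma}\|\bar{x}_{k+1} - x_k\|^2$, these reproduce four of the five terms of the claimed bound.

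Finally, for the remaining term, I would use that $x_\ast \in \Argmin f$ forces $f(x_\ast) = f^\ast$, so the summand $-\frac{2}{\mu+\frac{1}{\gamma}}[f(\bar{x}_{k+1}) - f(x_\ast)]$ of Lemma~\ref{lm:lsvrp_technical_lemma} is precisely $-\frac{2}{\mu+\frac{1}{\gamma}}(f(\bar{x}_{k+1}) - f^\ast)$. In the proof of Lemma~\ref{lm:lsvrp_lyap_first} this term was simply dropped via $f(\bar{x}_{k+1}) - f^\ast \geq 0$; here it must be kept verbatim. This is the crux of the modification rather than a genuine analytic obstacle: when $\mu = 0$ the strong contraction is lost, and it is exactly this retained function-value gap that later supplies the descent quantity which, after combining with the second Lyapunov term and averaging over $k$, yields the ergodic $O(1/K)$ guarantee of Corollary~\ref{crl:lsvrp_conv_cvx}. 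The entire content is thus recognizing which nonpositive term cannot be thrown away in the convex regime.
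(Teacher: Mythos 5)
Your proposal is correct and follows exactly the paper's own route: both specialize Lemma~\ref{lm:lsvrp_technical_lemma} to $y = \bar{x}_{k+1}$ (killing the remaining inner-product term) and simply retain, rather than discard, the nonpositive function-value gap $-\frac{2}{\mu+\frac{1}{\gamma}}(f(\bar{x}_{k+1}) - f^\ast)$. The paper states this in one sentence; your write-up just makes the bookkeeping explicit.
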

\begin{proof}
Recall that the proof of~\Cref{lm:lsvrp_lyap_first} relies on the technical~\Cref{lm:lsvrp_technical_lemma} by assigning a specific value to $y $ and, more importantly, explicitly eliminating the negative term $ -\frac{2}{\mu + \frac{1}{\gamma}} f(\bar{x}_{k+1} - f^\ast)$. If we retain the functional difference term, we obtain the proof of the current lemma.
\end{proof}

Preserving~\Cref{lm:lsvrp_lyap_second} unchanged, we will reformulate the result of~\Cref{thm:lsvrp_tech_conv} by utilizing~\Cref{lm:lsvrp_lyap_first_amended} in place of~\Cref{lm:lsvrp_lyap_first}, while following the exact same sequence of derivations. This approach yields~\Cref{thm:lsvrp_tech_conv_amended}.

\begin{theorem}\label{thm:lsvrp_tech_conv_amended}
	Let Assumptions~\ref{as:hess_sim_finite_sum} and~\ref{as:strong_cvx} hold. Let $x_\ast \in \Argmin f$ and $f^\ast = \min\limits_{x\in\RR^d} f(x)$. For a given $\xi \in [0, 1]$ and constants $c > 0$, $\gamma > 0$, and  $\zeta > 0$, suppose the following inequalities are satisfied:
	\begin{gather}
		c \leq \frac{1}{(1 + \mu\gamma)(1 - p(1 - \xi))(1 + \zeta^{-1})}\label{eq:lsvrp_c_advanced_amended},\\
		c p \xi \geq \frac{\delta^2}{\left( \mu + \frac{1}{\gamma}\right)^2(1 - c(1 - p))} \label{eq:lsvrp_stepsize_condition_amended},\\
		1 - r  = (1 - p(1 - \xi))(1 + \zeta) \label{eq:lsvrp_contraction_governance_amended}
	\end{gather}
	where $r$ is a positive constant, and $p \in (0, 1]$ is the probability parameter from Algorithm~\ref{alg:l_svrp}.
	
	Define the Lyapunov function:
	\begin{equation*}
		\Lambda_k \eqdef \|x_k - x_\ast\|^2 + c\|w_k - x_k\|^2.
	\end{equation*}
	where $x_k$ and $w_k$ are the iterates of Algorithm~\ref{alg:l_svrp}, and $x_\ast$ is the unique solution to problem~\eqref{eq:finite_sum}. Then, the iterates satisfy:
	\begin{equation}\label{eq:lsvrp_tech_conv_amended}
		\ExpBr{\Lambda_{k+1}} \leq \max\left\{\frac{1}{1 + \mu\gamma}, 1 - r\right\} \ExpBr{\Lambda_k} - \frac{2}{\mu + \frac{1}{\gamma}} (\ExpBr{f(\bar{x}_{k+1})} - f^\ast),
	\end{equation}
\end{theorem}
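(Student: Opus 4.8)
The plan is to mirror the proof of~\Cref{thm:lsvrp_tech_conv} essentially verbatim, treating the additional functional-gap term supplied by~\Cref{lm:lsvrp_lyap_first_amended} as a passive additive quantity that is never touched by any of the cancellation steps. Concretely, I would begin from the definition $\Lambda_{k+1} = \|x_{k+1} - x_\ast\|^2 + c\|w_{k+1} - x_{k+1}\|^2$, take the conditional expectation $\ExpBr{\cdot \mid x_k, w_k}$, and then substitute the bound~\eqref{eq:lsvrp_lyap_first_amended} for the first summand together with the bound~\eqref{eq:lsvrp_lyap_second} for the second. The only structural difference from the original combination step~\eqref{eq:lsvrp_aux3} is that~\eqref{eq:lsvrp_lyap_first_amended} carries the extra term $-\frac{2}{\mu + \frac{1}{\gamma}}(f(\bar{x}_{k+1}) - f^\ast)$; since $\bar{x}_{k+1}$ is deterministic given $x_k$ and $w_k$, this term factors out of the conditional expectation and merely appends to the right-hand side.

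Next I would perform exactly the same grouping of terms as in~\eqref{eq:lsvrp_aux3}. The $-\ExpBr{\|x_{k+1} - \bar{x}_{k+1}\|^2}$ contribution from~\Cref{lm:lsvrp_lyap_first_amended} and the $+c(1-p)\ExpBr{\|\bar{x}_{k+1} - x_{k+1}\|^2}$ contribution from~\Cref{lm:lsvrp_lyap_second} combine into the weight $(1 - c(1-p))$ on $\|x_{k+1} - \bar{x}_{k+1}\|^2$, which is absorbed into the inner-product bracket and bounded via~\Cref{lm:scalar_prod_bound} after checking $1 - c(1-p) > 0$ as in the original argument. The coefficient of $\|\bar{x}_{k+1} - x_k\|^2$ is $(1 - p(1 - \xi))(1 + \zeta^{-1})c - \frac{1}{1 + \mu\gamma}$, which is nonpositive by condition~\eqref{eq:lsvrp_c_advanced_amended} and is therefore dropped. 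Applying~\Cref{as:hess_sim_finite_sum} to $\ExpBr{\|\nabla\psi_{i_k}(\bar{x}_{k+1}) - \nabla\psi_{i_k}(w_k)\|^2}$ bounds it by $\delta^2\|w_k - \bar{x}_{k+1}\|^2$, and condition~\eqref{eq:lsvrp_stepsize_condition_amended} ensures this is dominated by the compensating term $-cp\xi\|w_k - \bar{x}_{k+1}\|^2$. The leading terms $\frac{1}{1+\mu\gamma}\|x_k - x_\ast\|^2$ and $(1 - p(1 - \xi))(1 + \zeta)c\|w_k - x_k\|^2 = (1 - r)c\|w_k - x_k\|^2$ then collapse into $\max\{\frac{1}{1 + \mu\gamma}, 1 - r\}\Lambda_k$ using~\eqref{eq:lsvrp_contraction_governance_amended}, while the functional-gap term rides along unchanged. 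Finally I would take the full tower expectation of both sides, turning $f(\bar{x}_{k+1})$ into $\ExpBr{f(\bar{x}_{k+1})}$ and yielding exactly~\eqref{eq:lsvrp_tech_conv_amended}.

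The main point requiring care -- and the only genuine ``obstacle,'' though a mild one -- is to confirm that retaining the functional-gap term does not interfere with any of the sign conditions invoked in the cancellations. Since $f(\bar{x}_{k+1}) \geq f^\ast$, that term is nonpositive and enters purely additively, so it can only strengthen the inequality; it appears in none of the coefficients that determine whether the $\|\bar{x}_{k+1} - x_k\|^2$ term is dropped, whether~\Cref{lm:scalar_prod_bound} applies, or whether the $\|w_k - \bar{x}_{k+1}\|^2$ terms cancel. Consequently the entire derivation of~\Cref{thm:lsvrp_tech_conv} transfers line-for-line, and the result follows as stated.
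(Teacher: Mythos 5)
Your proposal is correct and follows exactly the paper's own route: the paper's proof likewise states that one reruns the derivation of Theorem~\ref{thm:lsvrp_tech_conv} verbatim with Lemma~\ref{lm:lsvrp_lyap_first_amended} in place of Lemma~\ref{lm:lsvrp_lyap_first}, carrying the nonpositive functional-gap term $-\frac{2}{\mu + \frac{1}{\gamma}}(f(\bar{x}_{k+1}) - f^\ast)$ through untouched. Your additional check that this term does not interfere with any of the sign conditions or cancellations is exactly the right point of care, and nothing further is needed.
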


\begin{proof}
The proof follows the same sequence of derivations as in the proof of~\Cref{thm:lsvrp_tech_conv}, with the sole difference being that~\Cref{lm:lsvrp_lyap_first_amended} is applied instead of~\Cref{lm:lsvrp_lyap_first}.
\end{proof}
	
We are now ready to establish the result for convex functions.

\paragraph{Proof of~\Cref{crl:lsvrp_conv_cvx}}

\begin{proof}
As shown in the proof of~\Cref{crl:lsvrp_conv}, equations~\eqref{eq:lsvrp_many_defs} and~\eqref{eq:lsvrp_gamma_cond} ensure that the following conditions satisfy inequalities~\eqref{eq:lsvrp_c_advanced_amended},~\eqref{eq:lsvrp_stepsize_condition_amended}, and~\eqref{eq:lsvrp_contraction_governance_amended}:  
$$  
c = \frac{2p}{(3 - p)^2} \frac{1}{1 + \mu\gamma}, \quad \xi = \frac12, \quad r = \frac{p}{4},  
$$  
and we replicate the condition on $\gamma$ into~\eqref{eq:lsvrp_gamma_cond_copy}.  

By rearranging the terms in~\eqref{eq:lsvrp_tech_conv_amended}, we obtain  
\begin{equation}\label{eq:lsvrp_cvx_aux1}  
	\frac{2}{\mu + \frac{1}{\gamma}}\left[\ExpBr{f(\bar{x}_{k+1})} - f^\ast\right] \overset{\leq}{\eqref{eq:lsvrp_tech_conv_amended}} \left[\max\left\{\frac{1}{1 + \mu\gamma}, 1 - r\right\} \ExpBr{\Lambda_k} - \ExpBr{\Lambda_{k+1}} \right]  
	\leq \ExpBr{\Lambda_k} - \ExpBr{\Lambda_{k+1}}.  
\end{equation}  

To complete the proof, we sum all inequalities of this form over $k = 0, \dots, K - 1$ and rearrange the terms as follows:  
\begin{eqnarray*}  
	\frac{2}{\mu + \frac{1}{\gamma}}\frac{1}{K} \sum\limits_{k = 1}^{K}  \left[ \ExpBr{f(\bar{x}_k)} - f^\ast\right] \overset{\eqref{eq:lsvrp_cvx_aux1}}{\leq} \frac{1}{K} \sum\limits_{k=0}^{K - 1} \ExpBr{\Lambda_k} - \ExpBr{\Lambda_{k+1}} = \frac{\Lambda_0 - \ExpBr{\Lambda_{K}}}{K} \leq \frac{\Lambda_0}{K} = \frac{\|x_0 - x_\ast\|^2}{K}.  
\end{eqnarray*}
All that remains is to set $\mu = 0$.
\end{proof}

\section{Additional plots}

To empirically confirm the linear rate of convergence across all settings, we present plots similar to~\Cref{fig:example_one_run} for all runs. Figures~\ref{fig:all_plots_1} and~\ref{fig:all_plots_2} display the convergence of~\algname{L-SVRP} for all 48 configurations described in the experimental section, with each figure showing half of the configurations.

\begin{figure}
	\includegraphics[width=\linewidth]{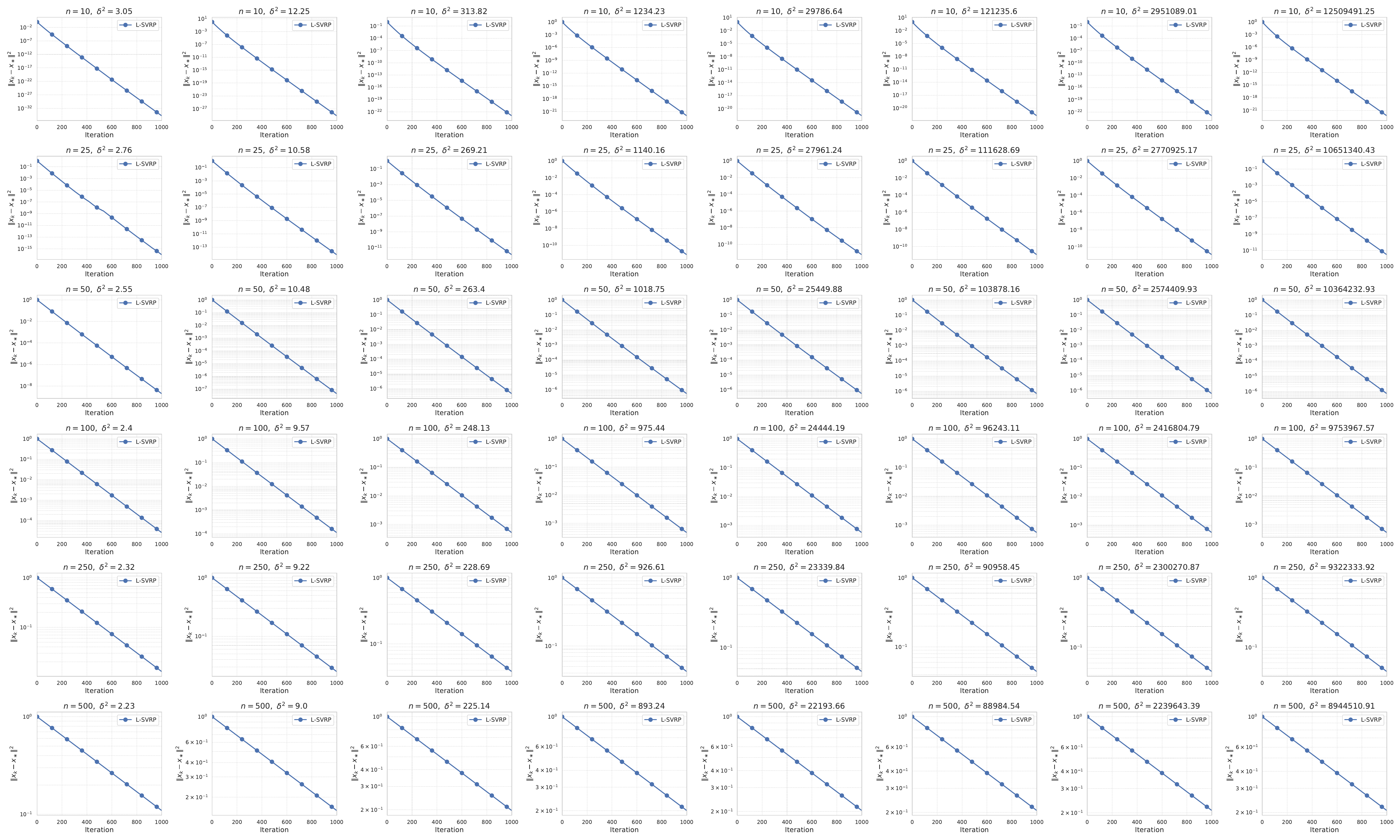}
	\caption{Convergence of~\algname{L-SVRP} for the first half of the 48 configurations described in the experimental section.}
	\label{fig:all_plots_1}
\end{figure}

\begin{figure}
	\includegraphics[width=\linewidth]{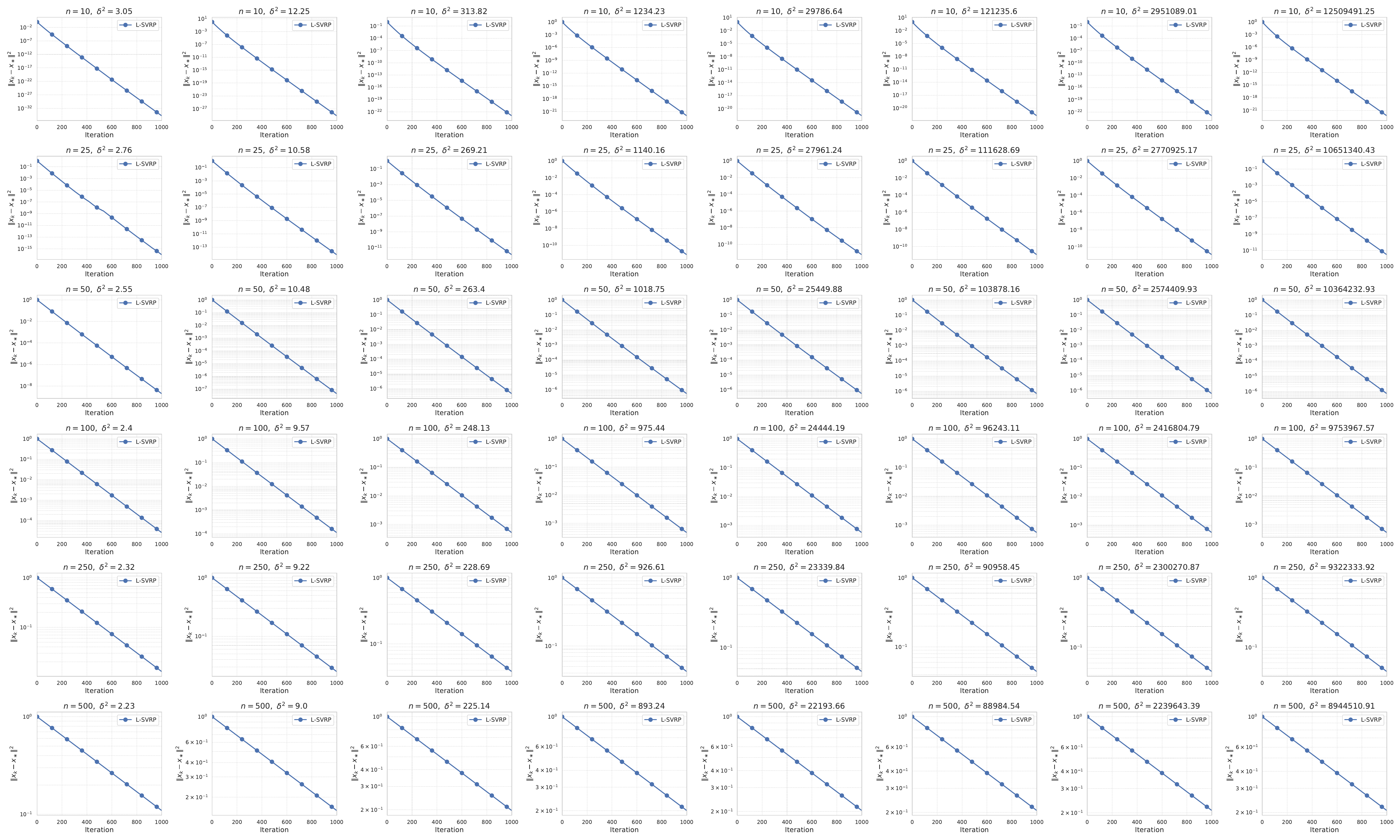}
	\caption{Convergence of~\algname{L-SVRP} for the second half of the 48 configurations described in the experimental section.}
	\label{fig:all_plots_2}	
\end{figure}

\end{document}